\newtheorem{thm}{Theorem}[section]
\newtheorem{cor}[thm]{Corollary}
\newtheorem{prop}[thm]{Proposition}
\newtheorem{lem}[thm]{Lemma}
\newtheorem{defn}[thm]{Definition}
\newtheorem{rem}[thm]{Remark}
\title[Multiple averages for certain averages along shifted primes]{Multiple recurrence and convergence for certain averages along shifted primes}
\author{Wenbo Sun}
\address{Department of Mathematics, Northwestern University, 2033 Sheridan Road Evanston, IL 60208-2730, USA}
\email{swenbo@math.northwestern.edu}
\begin{document}

\maketitle

\begin{abstract}
We show that any subset $A\subset\mathbb{N}$ with positive upper Banach density contains the pattern
$\{m,m+[n\alpha],\dots,m+k[n\alpha]\}$,
for some $m\in\mathbb{N}$ and $n=p-1$ for some prime $p$, where $\alpha\in\mathbb{R}\backslash\mathbb{Q}$. Making use for the Furstenberg Correspondence Principle, we do this by proving an associated recurrence result in ergodic theory along the shifted primes. We also prove the convergence result for the associated averages along primes and indicate other applications of these methods.
\end{abstract}

\section{Statement of Results}
\subsection{Introduction}
A \emph{measure preserving system} $(X,\mathcal{X},\mu, T)$ consists of a probability space $(X,\mathcal{X},\mu)$ and a measurable, measure preserving
transformation $T$ acting on it. Throughout this paper, we assume $T$ is invertible.
The limit behavior (existence and positivity) in $L^{2}(\mu)$ of a measure preserving system $(X,\mathcal{X},\mu, T)$ of multiple averages of the
form
\begin{equation}\label{form}\nonumber
\begin{split}
\lim_{N\rightarrow\infty}\frac{1}{N}\sum_{n=1}^{N}f_{1}(T^{a_{1}(n)}x)\cdot\ldots\cdot f_{k}(T^{a_{k}(n)}x)
\end{split}
\end{equation}
 has been widely studied in recent years for different choices of functions $a_{i}(n)$, where
$a_{i}(n)\colon\mathbb{N}\rightarrow\mathbb{N}$ are integer sequences (see, for example,
~\cite{B}, ~\cite{Fu}, ~\cite{FW}, ~\cite{HKp}, ~\cite{12},\\ ~\cite{L}, ~\cite{Z}).

In this paper, we study similar results
for averages over shifted primes, meaning, the primes or the primes plus or
minus 1. Let $\mathbb{P}$ denote the set of all primes and $\pi(N)$ denote the number of primes up to $N$. For all $x\in\mathbb{R}$, $\{x\}$ denotes the fractional part of $x$ and $[x]$ denotes the largest integer which is no larger than $x$. We prove the following theorem:
\begin{thm}\label{alpha2}Let $k\in\mathbb{N}, \alpha\in\mathbb{R}\backslash\mathbb{Q}$, $(X,\mathcal{X},\mu, T)$ be a measure preserving system. Let $f_{1},\dots,f_{k}\in L^{\infty}(\mu)$. Then the average
\begin{equation}\nonumber
\begin{split}
\frac{1}{\pi(N)}\sum_{p\in [N]\cap\mathbb{P}}T^{[p\alpha]}f_{1}\cdot\ldots\cdot T^{k[p\alpha]}f_{k}
\end{split}
\end{equation}
converges in $L^{2}(\mu)$ as $N\rightarrow\infty$. The conclusion is also true if $i[n\alpha]$ is replaced by $[in\alpha]$ for $i=1,\dots,k$.
\end{thm}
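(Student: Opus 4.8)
The plan is to transfer the average from the primes to a von~Mangoldt--weighted average over the integers and then to play off two essentially independent inputs against each other: the Host--Kra structure theory on the ergodic side, and the Gowers uniformity of the von~Mangoldt function on the number-theoretic side. Write $g(m)=T^{m}f_{1}\cdot T^{2m}f_{2}\cdots T^{km}f_{k}\in L^{2}(\mu)$, so that the average in the statement is $\frac{1}{\pi(N)}\sum_{p\in[N]\cap\mathbb{P}}g([p\alpha])$. By the prime number theorem and Abel summation, discarding the $o(1)$ contribution of proper prime powers, it suffices to establish $L^{2}(\mu)$-convergence of the weighted averages $\frac{1}{N}\sum_{n=1}^{N}\Lambda(n)\,g([n\alpha])$, where $\Lambda$ is the von~Mangoldt function. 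I would first carry out the $W$-trick, splitting this sum over residues $n\equiv b\ (\mathrm{mod}\ W)$ with $W=\prod_{p\le w}p$ and replacing $\Lambda$ by the normalized weight $\frac{\phi(W)}{W}\Lambda(Wn+b)$, whose mean is close to $1$.

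The first substantive step is to reduce to a nilsystem. Here I would prove a weighted generalized von~Neumann inequality showing that, for weights $w$ of the above form, the $L^{2}(\mu)$-norm of $\frac{1}{N}\sum_{n\le N}w(n)\,g([n\alpha])$ is dominated, uniformly in $w$, by a Host--Kra seminorm $\|f_{i_{0}}\|_{s}$ of a single one of the functions. Replacing each $f_{i}$ by its conditional expectation onto the characteristic factor $Z_{s}(T)$ then perturbs the averages by an arbitrarily small amount, and by the Host--Kra structure theorem $Z_{s}(T)$ is an inverse limit of $s$-step nilsystems; a routine approximation reduces us to the case where $(X,\mathcal{X},\mu,T)$ is a nilsystem $G/\Gamma$ with translation by $a\in G$ and the $f_{i}$ are continuous. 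The floor function is then absorbed into the dynamics: since $[n\alpha]$ is a generalized polynomial, the map $n\mapsto g([n\alpha])(x)$ is, for each fixed $x$, a generalized-polynomial nilsequence in the sense of Bergelson--Leibman, representable on a nilmanifold up to the discontinuities of $[\,\cdot\,]$. These discontinuities, occurring when $\{n\alpha\}$ lies in a short interval near an integer, I would handle by sandwiching $[\,\cdot\,]$ between smooth upper and lower approximations differing only on a set of $n$ of density $O(\varepsilon)$.

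It then remains to compare the weighted and uniform averages of a nilsequence $\psi(n)$. I would invoke the Green--Tao--Ziegler decomposition which, after the $W$-trick, writes the normalized von~Mangoldt weight as $1+\Lambda^{\flat}$ with $\|\Lambda^{\flat}\|_{U^{s+1}[N]}=o(1)$. Because a nilsequence of bounded complexity is asymptotically orthogonal to any sequence of small $U^{s+1}$-norm, the term $\Lambda^{\flat}$ contributes negligibly, so that $\frac{1}{N}\sum_{n\le N}\Lambda(n)\psi(n)$ and $\frac{1}{N}\sum_{n\le N}\psi(n)$ share the same limit. The uniform average converges by Leibman's equidistribution theorem for polynomial orbits on nilmanifolds, applied to the enlarged nilmanifold carrying both the $G$-orbit and the circle rotation by $\alpha$; this is exactly the integer analogue of the theorem. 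Letting $w\to\infty$ then yields convergence of the original prime averages, with limit equal to that of the integer averages.

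I expect the main obstacle to be the number-theoretic content of the preceding paragraph: establishing that the Gowers-uniform part $\Lambda^{\flat}$ has vanishing correlation with the Beatty-twisted nilsequence, and that the weighted averages admit seminorm control uniform in the weight. This is precisely where the full strength of the Green--Tao--Ziegler inverse theorem and the Gowers-norm estimates for $\Lambda$ are required. A secondary, more technical difficulty is the non-smoothness of the floor function: the exceptional set where $\{n\alpha\}$ is near an integer must be estimated uniformly in $N$, so that the bracketing approximations do not destroy either the equidistribution or the orthogonality estimates on which the argument rests.
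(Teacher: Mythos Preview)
Your overall architecture is sound and matches the paper in spirit: transfer to von Mangoldt weights, apply the $W$-trick, compare to the unweighted integer averages, and invoke Green--Tao--Ziegler for the uniformity of $\Lambda$. The nilsystem reduction and Leibman equidistribution you invoke for the integer averages are exactly what the paper uses (Propositions~4.3--4.6).

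The gap is in your ``weighted generalized von Neumann inequality.'' You assert that the $L^{2}(\mu)$-norm of $\frac{1}{N}\sum_{n\le N}w(n)\,g([n\alpha])$ is dominated, uniformly in $w$, by a Host--Kra seminorm of one of the $f_{i}$. For linear iterates $T^{in}$ this follows from iterated van der Corput, because $i(n+h)-in=ih$ is independent of $n$. For $T^{i[n\alpha]}$ this fails: the difference $[(n+h)\alpha]-[n\alpha]$ equals $[h\alpha]$ or $[h\alpha]+1$ depending on $\{n\alpha\}$, so after one van der Corput step the expression does not simplify to a shorter product with an $n$-independent shift. You locate the floor-function difficulty only at the later nilsequence-approximation stage and call it ``secondary,'' but it already blocks the seminorm reduction you need here.

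The paper's remedy is to partition the sum over $n$ according to which interval $\bigl(\tfrac{j-1}{(k+1)!},\tfrac{j}{(k+1)!}\bigr)$ contains $\{n\alpha\}$; on each piece all the relevant differences $a_{v}(n+\sum\epsilon_{j}h_{j})-a_{v}(n)$ become independent of $n$, and the iterated van der Corput goes through. The output (Proposition~4.1) is a bound not by a Host--Kra seminorm of $f_{i}$ but in the dual direction, by the Gowers $U^{k+1}$-norm of $b(n)\xi_{j}(\{n\alpha\})\mathbf{1}_{[1,N]}$. This is then paired with an upgraded Gowers-norm estimate (Proposition~3.2(2)) showing that $(\Lambda'_{w,r}-1)\mathbf{1}_{A}(R_{\alpha}^{Wn}x)$ has small $U^{k}$-norm, i.e.\ the product of the von Mangoldt correction with an \emph{indicator} on the circle, not just with a smooth nilsequence. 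With these two pieces one compares the prime average directly to the unweighted integer average, without first passing to a nilsystem on the ergodic side; the nilsystem reduction is used only inside the proof that the unweighted averages converge. If you insert the $\xi_{j}$-partition into your van der Corput step and replace the seminorm control by this Gowers-norm-of-the-weight control, your outline becomes essentially the paper's proof.
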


Frantzikinakis, Host and Kra ~\cite{3} first proved the linear polynomial case of Theorem \ref{alpha2} (i.e. the case all $i[n\alpha]$ are replaced with $in$), with the proof of the case $k\geq 3$ conditional upon the results of ~\cite{7} and ~\cite{6} that were subsequently proven. Then Wooley and Ziegler ~\cite{WZ} proved the convergence result for the polynomial case (i.e. the case all $i[n\alpha]$ are replaced with some polynomial with integer coefficients) for a single transformation. After that,
these results were generalized to the multi-dimensional polynomial case for commutative transformations by Frantzikinakis, Host and Kra ~\cite{4}. It is natural to ask whether one can prove a version of these theorems for generalized polynomials. Theorem \ref{alpha2} extends these results to linear generalized polynomials.

By what is now referred to as the Furstenberg Correspondence Principle, first introduced in ~\cite{Fu}, the positivity of
\begin{equation}\nonumber
\begin{split}
\int f\cdot T^{a_{1}(n)}f\cdot\ldots\cdot T^{a_{k}(n)}f d\mu
\end{split}
\end{equation}
 for some measure preserving system $(X,\mathcal{X},\mu, T)$ and some function $f\geq 0$ not identically 0 is equivalent to the positivity of the
upper Banach density $d^{*}$ of the set
\begin{equation}\nonumber
\begin{split}
d^{*}\Bigl(E\cap(E-a_{1}(n))\cap\dots\cap(E-a_{k}(n))\Bigr)
\end{split}
\end{equation}
for some $E\subset\mathbb{N}$, where $d^{*}(E)=\limsup_{\vert I\vert\rightarrow\infty}\frac{\vert I\cap E\vert}{\vert I\vert}$. So it is natural to ask when the limit in Theorem \ref{alpha2} is positive. We prove the following theorem:
\begin{thm}\label{alpha} Let $(X,\mathcal{X},\mu, T)$ be a measure preserving system, $A$ be a measurable set with $\mu(A)>0$ and
$\alpha\in\mathbb{R}\backslash\mathbb{Q}$ be an irrational number. Then the set of integers $n$ such that
\begin{equation}\nonumber
\begin{split}
\mu(A\cap T^{-[n\alpha]}A\cap\dots\cap T^{-[kn\alpha]}A)>0
\end{split}
\end{equation}
has nonempty intersection with $\mathbb{P}+1$, and with $\mathbb{P}-1$. The conclusion is also true if $[in\alpha]$ is replaced by $i[n\alpha]$ for $i=1,\dots,k$.
\end{thm}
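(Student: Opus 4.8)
The plan is to derive Theorem \ref{alpha} from the convergence statement of Theorem \ref{alpha2} together with a positivity estimate for the limit. Taking $f_{1}=\dots=f_{k}=1_{A}$ and integrating against $\mu$, it is enough to prove that
\[
\liminf_{N\to\infty}\frac{1}{\pi(N)}\sum_{p\in[N]\cap\mathbb{P}}\mu\bigl(A\cap T^{-[(p-1)\alpha]}A\cap\dots\cap T^{-[k(p-1)\alpha]}A\bigr)>0,
\]
and similarly with $p-1$ replaced by $p+1$. Indeed, if the $\liminf$ of an average of nonnegative quantities is positive, then for all large $N$ at least one summand is positive, which produces a prime $p$ with $n=p-1$ (respectively $n=p+1$) lying in the recurrence set; this is exactly the required nonempty intersection with $\mathbb{P}-1$ (respectively $\mathbb{P}+1$). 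The two shifts are symmetric, since passing from $p$ to $p\pm1$ only replaces $p\alpha$ by $p\alpha\pm\alpha$, i.e.\ translates the relevant orbit by the fixed amount $\pm\alpha$.

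First I would use the machinery behind Theorem \ref{alpha2} to reduce each of these averages to its projection onto a characteristic factor, which is an inverse limit of nilsystems. Modulo an $L^{2}(\mu)$ error that vanishes as $N\to\infty$, this lets me replace $1_{A}$ by its conditional expectation onto the nilfactor and thereby replace the integrand by a nilsequence evaluated along $n=p\mp1$. The crucial comparison is then between the prime-weighted average of this nilsequence and its uniformly weighted (Ces\`aro) average over all $n$: I would show these have the same limit. This is where the equidistribution of the shifted primes enters, both in the elementary form that $\{(p\pm1)\alpha\}$ is equidistributed mod $1$ (Vinogradov) and, at the level needed for the higher-order terms, in the form that the von Mangoldt weight is asymptotically orthogonal to the uniformity norms, so that primes equidistribute in nilmanifolds in the sense of Green--Tao--Ziegler.

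It then remains to prove positivity of the uniformly weighted average, where the prime weight has been removed. For the version with $i[n\alpha]$ I would argue directly: setting $m=[n\alpha]$, the integrand becomes $\mu(A\cap T^{-m}A\cap\dots\cap T^{-km}A)$, which by Furstenberg's multiple recurrence theorem has positive average over $m$, and since the generalized polynomial $[n\alpha]$ equidistributes in the governing nilsystem this positivity transfers to the average over $n$. For the version with $[in\alpha]$ I would first restrict to the Bohr set $S=\{n:\{n\alpha\}<1/k\}$, which has density $1/k$ by Weyl equidistribution and on which $[in\alpha]=i[n\alpha]$ for every $i\le k$; on $S$ the two patterns coincide, reducing this case to the previous one. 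In either case the limit is the integral over the nilfactor of a nonnegative function that is bounded below on a set of positive measure, hence positive.

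The main obstacle will be the transference step of the second paragraph, namely showing that replacing the uniform weight by the normalized prime weight neither alters the limit nor destroys the positive lower bound. Carrying the characteristic-factor reduction through uniformly in $N$, and then invoking the quantitative equidistribution of nilsequences along the primes (via the $W$-trick and the Green--Tao--Ziegler orthogonality estimates), must be done simultaneously with controlling the discontinuities that the floor function introduces: the indicator of a floor condition has to be approximated by smooth cutoffs on the nilmanifold, and the boundary contributions must be shown to be negligible after both the prime weighting and the shift by $\pm1$ are taken into account. Tracking this shift carefully is what guarantees that the surviving positivity is genuinely attached to $n\in\mathbb{P}\pm1$ rather than to $n\in\mathbb{P}$.
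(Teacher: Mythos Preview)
Your outline shares the paper's two-step shape (transfer from primes to integers, then positivity for the integer average), but the implementations diverge, and one step in your plan has a real gap.

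\textbf{How the paper does it.} The paper never passes through characteristic factors or nilmanifold equidistribution for the recurrence theorem. Instead it proves a direct van der Corput estimate (Proposition~\ref{norm4alpha}): for any weight $b(n)$ of subpolynomial growth,
\[
\Bigl\Vert\frac{1}{N}\sum_{n=1}^{N}b(n)\xi(\{n\alpha\})\,f_{0}\,T^{a_{1}(n)}f_{1}\cdots T^{a_{k}(n)}f_{k}\Bigr\Vert_{L^{2}(\mu)}
\ll\bigl\Vert b(n)\xi(\{n\alpha\})\bold{1}_{[1,N]}\bigr\Vert_{U^{k+1}(\mathbb{Z}_{(k+1)N})}+o_{N}(1),
\]
where $\xi$ is the indicator of a short arc in $\mathbb{T}$ avoiding the points $i/(k+1)!$. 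The partition $\xi_{i}=\bold{1}_{((i-1)/(k+1)!,\,i/(k+1)!)}$ is the key device: it forces all the floor-differences $a_{v}(n+h)-a_{v}(n)$ appearing in the van der Corput iteration to be independent of $n$, which is exactly what fails for $[n\alpha]$ without such a localization. With $b=\Lambda'_{w,1}-1$, the right-hand Gowers norm is then small by Proposition~\ref{g} (the Green--Tao estimate, carried through the indicator $\xi_{i}$), so the $W$-tricked prime average is close to the integer average for $[nW\alpha]$. Positivity comes from Proposition~\ref{or4alpha}, a \emph{uniform in $\alpha$} recurrence bound derived from Bergelson--Host--McCutcheon--Parreau; uniformity is essential because after the $W$-trick one is working with $W\alpha$, with $W\to\infty$.

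\textbf{Where your plan is different, and where it slips.} Your first move is to project onto the characteristic factor and then quote equidistribution of shifted primes in nilmanifolds. But the characteristic factor $\mathcal{Z}_{l}$ is, by definition, characteristic for the \emph{integer} averages; you have not explained why the projection error is also negligible for the \emph{prime} average. Concretely, writing $f=\bold{1}_{A}$ and $f'=\mathbb{E}(f\mid\mathcal{Z}_{l})$, you need
\[
\frac{1}{\pi(N)}\sum_{p\le N}\Bigl(\int f\,T^{[(p-1)\alpha]}f\cdots-\int f'\,T^{[(p-1)\alpha]}f'\cdots\Bigr)\to 0,
\]
and this does not follow from the characteristic-factor property alone: each difference term has some $f-f'\perp\mathcal{Z}_{l}$, but ``orthogonal to $\mathcal{Z}_{l}$ implies zero prime average'' is precisely the transference statement you are trying to prove. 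The paper circumvents this by never projecting: the Gowers-norm inequality of Proposition~\ref{norm4alpha} bounds the full weighted average directly, so the only input needed from the prime side is the Gowers-norm smallness of $(\Lambda'_{w,1}-1)\xi_{i}(\{nW\alpha\})$. Your smooth-cutoff idea is the right intuition for the floor discontinuity, but the paper's concrete realization is the $\xi_{i}$ partition inside the van der Corput argument, not an approximation on a nilmanifold.

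Your positivity argument for $i[n\alpha]$ (change variables $m=[n\alpha]$ and invoke Furstenberg) and your Bohr-set reduction for $[in\alpha]$ are both correct in spirit and close to what the paper does implicitly. The point you miss is that, because the comparison step goes through the $W$-trick (with $r=1$, so that $p-1=Wn$), the positivity you need is for the sequence $[nW\alpha]$ with $W\to\infty$, and the lower bound must not degenerate with $W$. The paper secures this via Proposition~\ref{or4alpha}, whose constant $c(\delta)$ is independent of $\alpha$; your sketch does not address this uniformity.
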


We immediately deduce the following:
\begin{cor}\label{Fualpha} For any $k\in\mathbb{N}, \alpha\in\mathbb{R}\backslash\mathbb{Q}$, every subset $A\subset\mathbb{N}$ with positive upper Banach density contains the following pattern:
\begin{equation}\nonumber
\begin{split}
\{m,m+[n\alpha],\dots,m+[kn\alpha]\},
\end{split}
\end{equation}
for some $m\in\mathbb{N}, n\in\mathbb{P}+1$, and for some $m\in\mathbb{N}, n\in\mathbb{P}-1$. The conclusion is also true if $[in\alpha]$ is replaced by $i[n\alpha]$ for $i=1,\dots,k$.
\end{cor}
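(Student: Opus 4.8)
The plan is to deduce Corollary~\ref{Fualpha} from Theorem~\ref{alpha} by an application of the Furstenberg Correspondence Principle, exactly as the narrative between the two theorems suggests. So the whole content of the corollary is a transfer from the dynamical statement back to the combinatorial statement about subsets of $\mathbb{N}$.

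First I would fix a set $A\subset\mathbb{N}$ with $d^*(A)>0$. The Furstenberg Correspondence Principle produces a measure preserving system $(X,\mathcal{X},\mu,T)$ together with a measurable set $\tilde A\subset X$ satisfying $\mu(\tilde A)=d^*(A)>0$ and, for every finite collection of integers $b_0,\dots,b_k$,
\begin{equation}\nonumber
\begin{split}
d^*\bigl((A-b_0)\cap(A-b_1)\cap\dots\cap(A-b_k)\bigr)\geq \mu\bigl(T^{-b_0}\tilde A\cap T^{-b_1}\tilde A\cap\dots\cap T^{-b_k}\tilde A\bigr).
\end{split}
\end{equation}
Applying Theorem~\ref{alpha} to this system and the set $\tilde A$ (which has positive measure), I obtain some $n\in\mathbb{P}+1$ for which
$\mu(\tilde A\cap T^{-[n\alpha]}\tilde A\cap\dots\cap T^{-[kn\alpha]}\tilde A)>0$, and likewise some $n\in\mathbb{P}-1$. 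By the displayed inequality this forces
$d^*\bigl(A\cap(A-[n\alpha])\cap\dots\cap(A-[kn\alpha])\bigr)>0$, so in particular this intersection is nonempty. Choosing any integer $m$ in it gives $m,\,m+[n\alpha],\,\dots,\,m+[kn\alpha]\in A$, which is precisely the asserted pattern, with the two cases $n\in\mathbb{P}+1$ and $n\in\mathbb{P}-1$ coming from the two cases of Theorem~\ref{alpha}. The final sentence, that the same holds with $[in\alpha]$ replaced by $i[n\alpha]$, is immediate since Theorem~\ref{alpha} carries the identical alternative.

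Since the corollary is explicitly labeled as something deduced ``immediately,'' I do not expect any genuine obstacle; the argument is entirely a bookkeeping application of an established principle. The one point requiring a little care is making sure the correspondence is invoked in the direction that yields a lower bound on density (the $\geq$ above) rather than an equality, so that positivity of the measure-theoretic intersection genuinely transfers to nonemptiness of the combinatorial intersection. I would also remark that, while Theorem~\ref{alpha} only guarantees a single suitable $n$, the density lower bound in fact shows the pattern occurs for many starting points $m$, and that each of $\mathbb{P}+1$ and $\mathbb{P}-1$ supplies such an $n$; but for the statement as written, exhibiting one admissible $n$ and one admissible $m$ suffices.
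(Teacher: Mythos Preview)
Your proposal is correct and matches the paper's approach exactly: the paper gives no explicit proof of the corollary, simply stating ``We immediately deduce the following'' after Theorem~\ref{alpha}, with the Furstenberg Correspondence Principle having already been recalled in the introduction as the bridge between the dynamical and combinatorial formulations. Your write-up spells out precisely this standard transfer and there is nothing to add.
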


\begin{rem}
It is worth noting that Theorem \ref{alpha} fails if $\mathbb{P}+1$ is replaced with $\mathbb{P}$. For example, if $(X,\mathcal{X},\mu, T)$ is the 1-dimensional torus with $Tx=x+\frac{1}{2\sqrt{2}} (\mod 1)$ and $A=(-1/8,1/8)$, then
$\mu(A\cap T^{-[n\sqrt{2}]}A)>0$
only if $n$ is even.
\end{rem}

The first result in this direction was obtained by
 Sark\"{o}zy ~\cite{14}, who proved that the difference set of $E-E$ for a set $E$ with positive upper Banach density contains a shifted prime $p-1$
 for some $p\in\mathbb{P}$ (and a similar result holds for $p+1$). The linear polynomial case of Theorem \ref{alpha} was proved in ~\cite{3}, with the proof of the case $k\geq 3$ conditional upon the results of ~\cite{7} and ~\cite{6} that were subsequently proven. Then Wooley and Ziegler ~\cite{WZ} proved the recurrence result for the polynomial case for a single transformation. Then Bergelson, Leibman and Ziegler ~\cite{short} proved the linear polynomial case of Theorem \ref{alpha} for the multi-dimensional case.
The multi-dimensional polynomial version of Theorem \ref{alpha} for commutative transformations was proved by Frantzikinakis, Host and Kra ~\cite{4}.

%It is natural to ask the analogous questions for generalized polynomials.
%For certain types of generalized polynomials, for example, when $i[n\alpha]$ in Theorems \ref{alpha2} and \ref{alpha} are replaced with $i[n^{2}\alpha]$ for $1\leq i\leq k$, with a bit more work we can extend the
%methods used in this paper and prove the analogous results.  The general
%case should also be covered by this method, but requires significantly
%more technical lemmas.

The method used in proving Theorems \ref{alpha2} and \ref{alpha} also applies to other results on shifted
primes for cubes and certain weighted averages. We indicate how such results can be obtained in Section 5.
\subsection{Strategy and Organization}
The technique used in ~\cite{4} was to compare the average under consideration along the shifted primes to the analogous
average along all the integers. In order to obtain results for averages along $[n\alpha]$ (Theorems \ref{alpha2} and \ref{alpha}), we need to modify the technique. Roughly speaking, the difficulty is that the difference $a(n+h)-a(n)$ is independent of $n$ when $a(n)=cn, c\in\mathbb{N}$, but this is not the case
when $a(n)=[n\alpha]$.
To overcome this obstacle, we need to consider only those $n$ such that $\{n\alpha\}$ lies in some short interval to guarantee $a(n+h)-a(n)$ is constant.

The background material is presented in Section 2. Section 3 is used to obtain some estimate for Gowers norms that is used in later sections. We present
the proofs of Theorems \ref{alpha2} and \ref{alpha} in Section 4. Other applications of the estimate for Gowers norms are presented in Section 5.
\\ \textbf{Ackonwledgment.} The author would like to thank N. Frantzikinakis for helpful discussions related to the estimate for Gowers norms and the material in Section 4, B. Kra for all the instructive advices in preparation of this article, and the referee for the patient reading and helpful suggestions.
\section{Background}
\subsection{Notation}
Throughout this paper, we denote the set of positive integers by
$\mathbb{N}$, the set of real numbers by $\mathbb{R}$, and the set of rational numbers by $\mathbb{Q}$. Write $\mathbb{Z}_{N}=\mathbb{Z}/{N\mathbb{Z}}$. When needed, the set $\mathbb{Z}_{N}$ is identified with
$[N]\colon=\{1,\dots,N\}$.

We use $o_{N}(1)$ to denote a quantity that
converges to 0 when $N\rightarrow\infty$ and all other parameters are fixed. The expression $a(n)\ll b(n)$ stands for $a(n)\leq Cb(n)$ for some constant $C$. If $C$ depends on some parameter $d$, we write $a(n)\ll_{d}b(n)$.

If $A$ is a subset of a space $X$, we write $\bold{1}_{A}(x)$ to be the index function of $A$, taking value 1 for $x\in A$ and 0 elsewhere. We denote the space of continuous functions on $X$ by $C(X)$. For any complex-valued function $g(n)$, denote
$\mathcal{C}g(n)=\overline{g(n)}$. If $S$ is a finite set and
$a\colon  S\rightarrow\mathbb{C}$ is a function on $S$, we write $\mathbb{E}_{n\in S}a(n)=\frac{1}{|S|}\Sigma_{n\in S}a(n)$.

For any $t\in\mathbb{N}$ and any element $\epsilon=(\epsilon_{1},\dots,\epsilon_{t})\in\{0,1\}^{t}$, denote $\vert\epsilon\vert=\sum_{i=1}^{t}\epsilon_{i}$.

\subsection{Gowers norms}

For any function $a\colon  \mathbb{Z}_{N}\rightarrow\mathbb{C}$, we inductively define:

\begin{equation}\nonumber
\begin{split}
\Vert a\Vert_{U^{1}(\mathbb{Z}_{N})}=\bigl\vert\mathbb{E}_{n\in\mathbb{Z}_{N}}a(n)\bigr\vert
\end{split}
\end{equation}
and

\begin{equation}\nonumber
\begin{split}
\Vert a\Vert_{U^{d+1}(\mathbb{Z}_{N})}=\Bigl(\mathbb{E}_{h\in\mathbb{Z}_{N}}\bigl\Vert
a_{h}\overline{a}\bigr\Vert_{U^{d}(\mathbb{Z}_{N})}^{2^{d}}\Bigr)^{1/{2^{d+1}}},
\end{split}
\end{equation}
where $a_{h}(n)=a(n+h)$. Gowers ~\cite{G} showed that this defines a norm on functions on $\mathbb{Z}_{N}$ for $d\geq 1$. These norms were later used by Green, Tao, Ziegler and others in studying the primes (see, for example, ~\cite{5},~\cite{65} and ~\cite{6}). Analogous semi-norms were defined in the ergodic setting by Host and Kra ~\cite{12}.

\subsection{Nilsequences and Nilmanifolds}

Let $G$ be a Lie group. Denote $G_{1}=G,$ $G_{i+1}=[G, G_{i}]=\{ghg^{-1}h^{-1}\colon g\in G, h\in G_{i}\}, i\geq 1$. We say that $G$ is \emph{d-step nilpotent} if $G_{d+1}=\{1\}$ and the system $(G/\Gamma, \mathcal{X}, \mu, T)$ is called a \emph{$d$-step nilsystem}, where $\Gamma\subset G$
is a discrete cocompact subgroup, $\mu$ is the measure
induced by the Haar measure of $G$, $\mathcal{X}$ is the Borel $\sigma$-algebra and $T$ is the transformation given by $T(x)=gx$ for some fixed $g\in G$. If $F$ is a continuous function
on a d-step system $(G/\Gamma, \mathcal{X}, \mu, T)$, for any $x\in G/\Gamma$, we say that $\{F(T^{n}x)\}_{n\in\mathbb{N}}$ is a \emph{basic d-step nilsequence}. A sequence is called a \emph{d-step nilsequence} if it is the uniform limit of basic d-step nilsequences.

Nilsequences were introduced by Bergelson, Host and Kra ~\cite{BHK} and they play an important role in the study of multiple averages and in the estimate of Gowers norms (see, for example, ~\cite{BHK}, ~\cite{5},~\cite{65},~\cite{6}~\cite{HKp},~\cite{12},~\cite{8},~\cite{L} and ~\cite{Z}).
As we need to be quantitative regarding these nilmanifolds, we endow each manifold with an arbitrary smooth Riemannian metric. We then define the
\emph{Lipschitz constant} of a basic nilsequence $\{F(T^{n}x)\}_{n\in\mathbb{N}}$ to be the Lipschitz constant of $F$. Notice that the Lipschitz constant of a basic
nilsequence $\{F(T^{n}x)\}_{n\in\mathbb{N}}$ is independent of the transformation $T$.
\subsection{van der Corput Lemma}
The use of the van der Corput Lemma in studying
multiple averages in ergodic theory was introduced by Bergelson ~\cite{B}. In this paper, we use a variation of the estimation of van der Corput:
\begin{lem}\label{van} Let $\{v(n)\}_{n=1}^{N}$ be a sequence of elements in a Hilbert space with norm $\Vert\cdot\Vert$ and inner product $\langle\cdot, \cdot\rangle$. Then
\begin{equation}\nonumber
\begin{split}
\Bigl\Vert\frac{1}{N}\sum_{n=1}^{N}v(n)\Bigr\Vert^{2}\ll\frac{1}{N^{2}}\sum_{n=1}^{N}\Bigl\Vert v(n)\Bigr\Vert^{2}
+\frac{1}{N}\sum_{h=1}^{N}\Bigl\vert\frac{1}{N}\sum_{n=1}^{N-h}\langle v(n+h), v(n)\rangle\Bigr\vert.
\end{split}
\end{equation}
\end{lem}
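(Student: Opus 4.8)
The plan is to prove this by directly expanding the squared norm of the sum into a double sum of inner products, rather than invoking the usual window-averaging form of van der Corput's inequality; for the particular normalization here the direct route is cleanest. Writing $S=\sum_{n=1}^{N}v(n)$, I would start from the identity
\begin{equation}\nonumber
\Vert S\Vert^{2}=\langle S,S\rangle=\sum_{n=1}^{N}\sum_{m=1}^{N}\langle v(n),v(m)\rangle,
\end{equation}
and immediately separate the diagonal contribution $\sum_{n=1}^{N}\langle v(n),v(n)\rangle=\sum_{n=1}^{N}\Vert v(n)\Vert^{2}$ from the off-diagonal terms.

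For the off-diagonal part I would reindex by the gap $h=\vert n-m\vert\geq 1$. After writing the larger index as the smaller one plus $h$, the terms with $m>n$ become $\sum_{h=1}^{N-1}\sum_{n=1}^{N-h}\langle v(n),v(n+h)\rangle$, and the terms with $m<n$ become $\sum_{h=1}^{N-1}\sum_{n=1}^{N-h}\langle v(n+h),v(n)\rangle$. The key observation is that in a complex Hilbert space these two triangular sums are complex conjugates of one another, since $\langle v(n),v(n+h)\rangle=\overline{\langle v(n+h),v(n)\rangle}$. Hence their sum equals $2\,\mathrm{Re}\bigl(\sum_{h=1}^{N-1}\sum_{n=1}^{N-h}\langle v(n+h),v(n)\rangle\bigr)$, which I would bound above by $2\sum_{h=1}^{N-1}\bigl\vert\sum_{n=1}^{N-h}\langle v(n+h),v(n)\rangle\bigr\vert$.

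Combining the two pieces gives
\begin{equation}\nonumber
\Vert S\Vert^{2}\leq\sum_{n=1}^{N}\Vert v(n)\Vert^{2}+2\sum_{h=1}^{N-1}\Bigl\vert\sum_{n=1}^{N-h}\langle v(n+h),v(n)\rangle\Bigr\vert.
\end{equation}
Dividing through by $N^{2}$ and noting that the $h=N$ term of the second sum is empty, so that extending the range to $h=N$ costs nothing, the right-hand side becomes exactly the claimed bound up to the harmless factor of $2$, which is absorbed into the $\ll$ notation. I do not expect a genuine obstacle here; the only care needed is the bookkeeping of the conjugate symmetry between the upper and lower triangles and the alignment of the index ranges $1\leq n\leq N-h$, both of which are routine.
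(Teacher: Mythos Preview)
Your argument is correct: the direct expansion of $\Vert S\Vert^{2}$ into diagonal and off-diagonal inner products, followed by conjugate-symmetry and the triangle inequality, yields exactly the stated bound with implicit constant $2$. The paper itself does not give a proof at all; it simply cites Kuipers--Niederreiter for a special case and asserts the general case is the same. The classical van der Corput inequality there involves an auxiliary window parameter $H$ and an application of Cauchy--Schwarz after averaging over shifts, which is more than is needed for the specific normalization in this lemma. Your direct route bypasses that machinery entirely and is in fact the cleanest way to obtain this particular formulation; nothing is lost, since the paper only ever uses the lemma in the form stated.
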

The proof of a special case of this lemma can be found in ~\cite{9} and the proof of the general
case is essentially the same.

\section{Estimation of Modified von Mangoldt Function}
Throughout this paper, we let $\mathbb{T}=\mathbb{R}/\mathbb{Z}$ denote the 1-dimensional torus with Haar measure $\lambda$, associated $\sigma$-algebra $\mathcal{X}$
and transformation $R_{\alpha}(x)=x+\alpha ($mod $1)$, and we call the system $(\mathbb{T},\mathcal{X},\lambda,R_{\alpha})$ the 1-dimensional torus. For convenience, we sometimes identify $\mathbb{T}$ with the interval $[0,1]$.

Let $\Lambda\colon  \mathbb{N}\rightarrow\mathbb{R}$ denote the von Mangoldt function, taking the value $\log{p}$ on the prime $p$ and its powers and 0
elsewhere, and let $\Lambda'(n)=\bold{1}_{\mathbb{P}}(n)\Lambda(n)$. We use the von Mangoldt function to replace the indicator function $\bold{1}_{\mathbb{P}}(n)$ since it has better analytic properties. We have the following lemma:

\begin{lem}\label{lambda2}
If a function $a\colon \mathbb{N}^{k}\rightarrow\mathbb{C}$ is bounded, then
\begin{equation}\nonumber
\begin{split}
\lim_{N\rightarrow\infty}\Bigl\vert{\frac{1}{\pi(N)^{k}}\sum_{p_{i}\in\mathbb{P}, p_{i}\leq N}{a(p_{1},\dots,p_{k})}}-{\frac{1}{N^{k}}\sum_{1\leq
n_{1},\dots,n_{k}\leq N}{\prod_{i=1}^{k}\Lambda'(n_{i})a(n_{1},\dots,n_{k})}}\Bigr\vert=0.
\end{split}
\end{equation}
\end{lem}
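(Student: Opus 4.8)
The plan is to write both averages over the same index set of $k$-tuples of primes and to measure how far the von~Mangoldt weights are from being constant. Writing $M=\sup|a|$ and introducing the normalized weight $w_{N}(p)=\frac{\pi(N)}{N}\log p$ for each prime $p\leq N$, the second average becomes $\frac{1}{\pi(N)^{k}}\sum w_{N}(p_{1})\cdots w_{N}(p_{k})\,a(p_{1},\dots,p_{k})$, while the first is the same sum with every weight replaced by $1$. Hence the quantity inside the absolute value in the statement equals
\[
\frac{1}{\pi(N)^{k}}\sum_{\substack{p_{1},\dots,p_{k}\leq N\\ p_{i}\in\mathbb{P}}}\Bigl(1-\prod_{i=1}^{k}w_{N}(p_{i})\Bigr)a(p_{1},\dots,p_{k}),
\]
whose modulus is at most $\frac{M}{\pi(N)^{k}}\sum_{p_{1},\dots,p_{k}\leq N}\bigl\vert 1-\prod_{i}w_{N}(p_{i})\bigr\vert$.

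First I would note that the weights are uniformly bounded: since $\pi(N)\sim N/\log N$ by the Prime Number Theorem, there is a constant $C$ with $0\leq w_{N}(p)\leq C$ for all primes $p\leq N$ and all large $N$. Then I telescope $1-\prod_{i}w_{N}(p_{i})=-\sum_{j=1}^{k}\bigl(\prod_{i<j}w_{N}(p_{i})\bigr)\bigl(w_{N}(p_{j})-1\bigr)$, so that each summand is a product of functions of the individual variables. The free summations over $p_{i}$ with $i>j$ each contribute a factor $\pi(N)$ which cancels against the normalization, and the remaining sums factor across the variables, giving a bound of the form
\[
\sum_{j=1}^{k}\Bigl(\prod_{i=1}^{j-1}\tfrac{1}{\pi(N)}\textstyle\sum_{p\leq N}w_{N}(p)\Bigr)\cdot\tfrac{1}{\pi(N)}\sum_{p\leq N}\bigl\vert w_{N}(p)-1\bigr\vert.
\]
Since $\frac{1}{\pi(N)}\sum_{p\leq N}w_{N}(p)=\frac{1}{N}\sum_{p\leq N}\log p\to 1$, every factor except the last stays bounded, and $\frac{1}{\pi(N)}\sum_{p\leq N}\vert w_{N}(p)-1\vert=S_{N}$, where $S_{N}=\sum_{p\leq N}\bigl\vert\frac{\log p}{N}-\frac{1}{\pi(N)}\bigr\vert$. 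Thus the whole expression is $\ll_{k}M\,S_{N}$, and everything reduces to proving $S_{N}\to 0$.

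The analytic heart, and the step I expect to require the most care, is this one-variable estimate, because $\log p$ ranges over a factor of order $\log N$ as $p$ runs over the primes up to $N$. I would split at $N^{1-\varepsilon}$ for fixed $\varepsilon>0$. For the small primes $p\leq N^{1-\varepsilon}$ I bound the corresponding part of $S_{N}$ by $\frac{1}{N}\sum_{p\leq N^{1-\varepsilon}}\log p+\frac{\pi(N^{1-\varepsilon})}{\pi(N)}$, and both terms tend to $0$ by the Prime Number Theorem in the forms $\sum_{p\leq x}\log p\sim x$ and $\pi(x)\sim x/\log x$. For the large primes $N^{1-\varepsilon}<p\leq N$ I use $\vert\log p-\log N\vert\leq\varepsilon\log N$ together with $\frac{1}{\pi(N)}=\frac{\log N}{N}\bigl(1+o_{N}(1)\bigr)$ to get $\bigl\vert\frac{\log p}{N}-\frac{1}{\pi(N)}\bigr\vert\leq\frac{\log N}{N}\bigl(\varepsilon+o_{N}(1)\bigr)$; summing over at most $\pi(N)$ such primes and using $\pi(N)\log N/N\to 1$ yields a contribution of at most $\varepsilon+o_{N}(1)$. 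Hence $\limsup_{N}S_{N}\leq\varepsilon$ for every $\varepsilon>0$, so $S_{N}\to 0$, which completes the proof. The telescoping and factorization of the previous paragraph are routine bookkeeping; it is only the treatment of the varying size of $\log p$ that needs the split above.
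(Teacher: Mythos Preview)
Your proof is correct and follows essentially the same approach the paper indicates: the paper does not spell out a proof but simply remarks that the case $k=1$ is in \cite{3} and that the general case ``can be derived easily from the special case.'' Your telescoping of $1-\prod_{i}w_{N}(p_{i})$ is precisely the easy derivation of the general case from the one-variable case, and your estimate $S_{N}\to 0$ via the Prime Number Theorem is exactly the content of the $k=1$ statement; so your write-up fills in the details the paper omits rather than taking a different route.
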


The proof of the case $k=1$ can be found in for example ~\cite{3}, and the general case can be derived easily from the special case.

Throughout this paper we denote $W=\Pi_{p\in\mathbb{P}, p<w}{p}$ for $w>2$, i.e. $W$ is always assumed to be an integer depending on $w$. For $r\in\mathbb{N}$, we always denote
$\Lambda'_{w, r}(n)=\frac{\phi(W)}{W}\Lambda'(Wn+r)$,
where $\phi(n)=\sum_{1\leq d\leq n,d\nmid n}1$ is the Euler function, and $\Lambda'_{w, r}(n)$ is referred to as a \emph{modified von Mangoldt function}.
The key to the study of convergence and recurrence results along primes is the estimate of the Gowers norms of this modified von Mangoldt function.
The purpose for this section is to establish a variation of Theorem 7.2 of ~\cite{5}, which allows us to estimate not only the Gowers norms of the modified von Mangoldt function, but also that of the product of the modified von Mangoldt function and some "well-behaved" sequences. The method we use basically follows the
one used in ~\cite{5}. We prove:
\begin{prop}\label{g}
(1)
For any bounded basic k-step nilsequence $\{F(g^{n}x)\}_{n\in\mathbb{N}}$ with bounded Lipschitz norm, both
\begin{equation}\nonumber
\begin{split}
\sup_{1\leq r<w, (r,w)=1}\Bigl\Vert(\Lambda'_{w, r}-1)\bold{1}_{[1, N]}(n)F(g^{Wn}x)\Bigr\Vert_{{U^{k}(\mathbb{Z}_{kN})}}
\end{split}
\end{equation}
and
\begin{equation}\nonumber
\begin{split}
\sup_{1\leq r<w, (r,w)=1}\Bigl\Vert(\Lambda'_{w, r}-1)\bold{1}_{[1, N]}(n)F(g^{n}x)\Bigr\Vert_{{U^{k}(\mathbb{Z}_{kN})}}
\end{split}
\end{equation}
converge to 0 as $N\rightarrow\infty$ and then $w\rightarrow\infty$.

(2)Let $(\mathbb{T},\mathcal{X},\lambda,R_{\alpha})$ be the 1-dimensional torus with $\alpha\in\mathbb{R}\backslash\mathbb{Q}$.
Let $A\in\mathcal{X}$ be an interval. Then for any $x\in\mathbb{T}$, both
\begin{equation}\nonumber
\begin{split}
\sup_{1\leq r<w, (r,w)=1}\Bigl\Vert(\Lambda'_{w, r}-1)\bold{1}_{[1, N]}(n)\bold{1}_{A}(R_{\alpha}^{Wn}x)\Bigr\Vert_{{U^{k}(\mathbb{Z}_{kN})}}
\end{split}
\end{equation}
and
\begin{equation}\nonumber
\begin{split}
\sup_{1\leq r<w, (r,w)=1}\Bigl\Vert(\Lambda'_{w, r}-1)\bold{1}_{[1, N]}(n)\bold{1}_{A}(R_{\alpha}^{n}x)\Bigr\Vert_{{U^{k}(\mathbb{Z}_{kN})}}
\end{split}
\end{equation}
converge to 0 as $N\rightarrow\infty$ and then $w\rightarrow\infty$.
\end{prop}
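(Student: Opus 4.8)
The plan is to follow the method of \cite{5} for the Gowers norms of the modified von Mangoldt function, carrying the extra factor $F(g^{Wn}x)$ (resp. $F(g^{n}x)$) through the argument and absorbing it into the nilsequence at the orthogonality step; part (2) is then deduced from part (1) by a sandwiching argument plus a boundary estimate.

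For part (1), I would fix a bounded basic $k$-step nilsequence $\psi(n)=F(g^{Wn}x)$ of bounded Lipschitz norm and argue by contradiction: suppose the supremum does not tend to $0$, so that there is $\delta>0$ for which, for arbitrarily large $w$ and, for each such $w$, arbitrarily large $N$ and some admissible $r$, one has $\Vert(\Lambda'_{w,r}-1)\mathbf{1}_{[1,N]}(n)\psi(n)\Vert_{U^{k}(\mathbb{Z}_{kN})}\geq\delta$. Since $\Lambda'_{w,r}$ is unbounded, I would apply the inverse theorem for the Gowers norms in its transferred form relative to a pseudorandom majorant $\nu$ of $\Lambda'_{w,r}$ (the enveloping sieve, as in \cite{5}). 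This produces a $(k-1)$-step nilsequence $\phi$ of complexity and Lipschitz norm depending only on $\delta$ and $k$ with $\bigl|\mathbb{E}_{n\in[N]}(\Lambda'_{w,r}(n)-1)\psi(n)\overline{\phi(n)}\bigr|\geq c$ for some $c=c(\delta,k)>0$.

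The key observation is that $\psi(n)\overline{\phi(n)}=(F\otimes\overline{\tilde F})\bigl((g^{W},\tilde g)^{n}(x,\tilde x)\bigr)$ is a basic $k$-step nilsequence on the product nilmanifold, whose Lipschitz norm and complexity are controlled by those of $F$ and $\phi$, hence bounded uniformly in $N$, $w$, and $r$. Thus $\Lambda'_{w,r}-1$ correlates nontrivially with a $k$-step nilsequence of bounded complexity, uniformly in $r$, contradicting the orthogonality of the modified von Mangoldt function to bounded-complexity nilsequences that underlies Theorem 7.2 of \cite{5}. Letting first $N\to\infty$ and then $w\to\infty$ yields the claim, and the statement with $F(g^{n}x)$ follows identically with generator $g$ in place of $g^{W}$.

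For part (2) I would sandwich $\mathbf{1}_{A}$ between Lipschitz functions $F^{-}_{\epsilon}\leq\mathbf{1}_{A}\leq F^{+}_{\epsilon}$ on $\mathbb{T}$ with $0\leq F^{\pm}_{\epsilon}\leq 1$ and $\int(F^{+}_{\epsilon}-F^{-}_{\epsilon})\,d\lambda\leq\epsilon$, and write $\mathbf{1}_{A}(R_{\alpha}^{Wn}x)=F^{-}_{\epsilon}(R_{\alpha}^{Wn}x)+h(R_{\alpha}^{Wn}x)$ with $0\leq h\leq\mathbf{1}_{B}$ and $\lambda(B)\leq\epsilon$. By the triangle inequality for the $U^{k}$ norm, the contribution of $F^{-}_{\epsilon}(R_{\alpha}^{Wn}x)$ is handled by part (1) applied to this $1$-step nilsequence (of Lipschitz norm $\ll_{\epsilon}1$), so it tends to $0$ as $N\to\infty$ then $w\to\infty$ for each fixed $\epsilon$. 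The remaining term $\Vert(\Lambda'_{w,r}-1)\mathbf{1}_{[1,N]}(n)h(R_{\alpha}^{Wn}x)\Vert_{U^{k}(\mathbb{Z}_{kN})}$ is where the real work lies, and I expect it to be the main obstacle: part (1) does not apply since $h$ is not continuous, and a crude pointwise bound is unavailable since $\Lambda'_{w,r}$ is unbounded. The plan is to dominate $|\Lambda'_{w,r}-1|$ by $C\nu$, expand the $U^{k}$ norm, and combine the linear forms condition for $\nu$ with the fact that $R_{W\alpha}$ is a uniquely ergodic irrational rotation (so the density of $n\leq N$ with $R_{\alpha}^{Wn}x\in B$ is at most $\epsilon+o_{N}(1)$) to bound this contribution by $\ll\epsilon+o_{N}(1)$. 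Letting $N\to\infty$, then $w\to\infty$, then $\epsilon\to 0$ completes the proof, the remaining effort being the bookkeeping needed to keep all nilsequence complexities uniform.
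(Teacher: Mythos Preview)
Your treatment of part (1) is essentially the paper's: argue by contradiction, invoke the transferred inverse theorem (Propositions~10.1 and~6.4 of \cite{5}) to obtain a correlating $(k-1)$-step nilsequence of bounded complexity, absorb the given nilsequence into it on a product nilmanifold, and contradict Proposition~10.2 of \cite{5}.

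For part (2), however, your plan diverges from the paper's and contains a genuine gap. You sandwich $\mathbf{1}_{A}$ at the level of the Gowers norm and are left with the term $\Vert(\Lambda'_{w,r}-1)\mathbf{1}_{[1,N]}(n)\,h(R_{\alpha}^{Wn}x)\Vert_{U^{k}}$, which you propose to bound by expanding the norm, majorising $|\Lambda'_{w,r}-1|$ by $C\nu$, and combining the linear forms condition with the small density of $B$. After dropping all but one copy of $h$ you would need
\[
\mathbb{E}_{n}\,\mathbf{1}_{B}(R_{\alpha}^{Wn}x)\cdot\mathbb{E}_{\vec{h}}\prod_{\omega\in\{0,1\}^{k}}\nu(n+\omega\cdot\vec{h})\ \ll\ \epsilon,
\]
but the linear forms condition is an average over \emph{all} variables including $n$, not a pointwise statement in $n$, so the two factors cannot simply be decoupled. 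A Cauchy--Schwarz to separate them produces a larger linear system with repeated forms (e.g.\ $\nu(n)^{2}$), which is not covered by the standard hypotheses on $\nu$. This is real additional work, not the ``bookkeeping'' you suggest, and it is not clear it goes through.

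The paper avoids this obstacle by performing the sandwich \emph{after} applying the inverse theorem, i.e.\ at the level of the linear correlation $\mathbb{E}_{n\leq kN}(\Lambda'_{w,r}-1)\mathbf{1}_{[1,N]}(n)\,\mathbf{1}_{A}(R_{\alpha}^{Wn}x)\,F'_{N}(h^{n}x)$. Choosing smooth $F^{\delta'}_{l}\leq\mathbf{1}_{A}\leq F^{\delta'}_{u}$ with $\int(F^{\delta'}_{u}-F^{\delta'}_{l})\,d\lambda<\delta'/4$ and (without loss) $F'_{N}\geq 0$, one bounds this correlation above by the analogous correlation with $F^{\delta'}_{u}$ in place of $\mathbf{1}_{A}$ (handled exactly as in part~(1) on the product nilmanifold) plus the \emph{unweighted} average $\mathbb{E}_{n\leq kN}(F^{\delta'}_{u}-F^{\delta'}_{l})(R_{\alpha}^{Wn}x)$, which carries no $\Lambda'$ factor at all and is $\leq\delta'/2$ by equidistribution of the irrational rotation. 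The lower bound is symmetric. The point is that at the linear stage the error term decouples from $\Lambda'$ for free, whereas at the $U^{k}$ stage it does not; this is precisely why the paper runs the contradiction argument uniformly for both parts and postpones the approximation of $\mathbf{1}_{A}$ until after the inverse step.
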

\begin{rem}
  Our definition of nilsequences is different from the one used in ~\cite{5}. The basic nilsequence defined in this paper is called a nilsequence in ~\cite{5}.
\end{rem}
\begin{proof} For any $\delta>0$, we can find two non-negative smooth functions $F_{l}^{\delta}(x)\leq \bold{1}_{A}(x)\leq F_{u}^{\delta}(x)$ such that $\int_{\mathbb{T}}(F_{u}^{\delta}-F_{l}^{\delta})d\lambda<\delta/4$.
We temporally write $G_{W}(n)$ to represent one of the following functions: $F(g^{Wn}x),F(g^{n}x),\bold{1}_{A}(R_{\alpha}^{Wn}x),\bold{1}_{A}(R_{\alpha}^{n}x)$. Suppose the statement is not true for one of the functions $G_{W}(n)$ listed above. Then there exists $\delta>0$ and sequences of non-negative integers $\{w_{N}\}_{N\in\mathbb{N}},\{r_{N}\}_{N\in\mathbb{N}}, \{W_{N}\}_{N\in\mathbb{N}}$ and  $I\subset\mathbb{N}$ an infinite subset, such
that $W_{N}=\prod_{p\in\mathbb{P}, p<w_{N}}p, W_{N}=o(\log N), r_{N}$ is coprime with $W_{N}$, and $W_{N}\rightarrow\infty$ as $N\rightarrow\infty$, and
\begin{equation}\nonumber
\begin{split}
\Bigl\Vert(\Lambda'_{w_{N}, r_{N}}-1)\bold{1}_{[1, N]}(n)G_{W_{N}}(n)\Bigr\Vert_{{U^{k}(\mathbb{Z}_{kN})}}>\delta
\end{split}
\end{equation}
for all $N\in I$. Moreover, $I$ can be chosen such that
\begin{equation}\label{t1}
\begin{split}
\Bigl\vert\frac{1}{kN}\sum_{n=1}^{kN}F_{c}^{\delta'}(R_{\alpha}^{W_{N}n}x)-\int_{\mathbb{T}} F_{c}^{\delta'}d\lambda\Bigr\vert<\delta'/8
\end{split}
\end{equation}
and
\begin{equation}\label{t2}
\begin{split}
\Bigl\vert\frac{1}{kN}\sum_{n=1}^{kN}F_{c}^{\delta'}(R_{\alpha}^{n}x)-\int_{\mathbb{T}} F_{c}^{\delta'}d\lambda\Bigr\vert<\delta'/8
\end{split}
\end{equation}
for $c=l$, and $u, N\in I$(we can do so because the order of the choice is that first a sequence of $w$ is picked and then we attach to each $w$ with some $N$ sufficiently large). Here $\delta'$ is some constant to be chosen latter depending only on $\delta$ and $k$. By Propositions 10.1 and 6.4 of ~\cite{5}, (Proposition 10.1 of ~\cite{5} was based on the inverse
conjecture for the Gowers norms which was later proved in ~\cite{6}) there exists $\delta'>0$ and a finite collection of $(k-1)$-step nilmanifolds
$\mathcal{U}$ such that for any$N\in I$, there exists a basic $(k-1)$-step nilsequence $\{F'_{N}(h^{n}x)\}_{n\in\mathbb{Z}}$ from one of the
manifolds in $\mathcal{U}$ with bound 1 and Lipschitz constant $O_{\delta,k}(1)$ such that
\begin{equation}\label{mo}
\begin{split}
\Bigl\vert\mathbb{E}_{n\leq kN}(\Lambda'_{w_{N}, r_{N}}-1)\bold{1}_{[1, N]}(n)G_{W_{N}}(n)F'_{N}(h^{n}x)\Bigr\vert>\delta'.
\end{split}
\end{equation}
We now choose the $\delta'$ appearing in (\ref{t1}) and (\ref{t2}) to be the same $\delta'$ appearing in (\ref{mo}) (this is possible because $\delta'$ is independent of $N$).
By passing to an infinite subset of $I$, we can assume all $\{F'_{N}(h^{n}x)\}_{n\in\mathbb{N}}$ comes from the same nilmanifold $Y$. We still denote this subset by $I$ for convenience.

\textbf{Case that $G_{W}(n)=F(g^{Wn}x)$ or $F(g^{n}x)$.} In this case, for each $N\in I, \{S_{N}(n)\colon=G_{W_{N}}(n)F'_{N}(h^{n}x)\}_{n\in\mathbb{N}}$ is a basic nilsequence from $X\times Y$ with a uniform Lipschitz bound.
Then (\ref{mo}) contradicts Proposition 10.2 of ~\cite{5}.

\textbf{Case that $G_{W}(n)=\bold{1}_{A}(R_{\alpha}^{n}x)$ or $\bold{1}_{A}(R_{\alpha}^{Wn}x)$.} We assume $G_{W}(n)=\bold{1}_{A}(R_{\alpha}^{Wn}x)$ since the proof of the other case is identical.
We may assume without loss of generality that all functions $F'_{N}(x), N\in I$ are nonnegative (otherwise we can split $F'_{N}(x)$ as the difference of two
continuous nonnegative functions and use the argument for each one). Then
\begin{equation}\label{r2}
\begin{split}
&\qquad\mathbb{E}_{n\leq kN}(\Lambda'_{w_{N}, r_{N}}-1)\bold{1}_{[1, N]}(n)\bold{1}_{A}(R_{\alpha}^{W_{N}n}x)F'_{N}(h^{n}x)
\\&\leq\mathbb{E}_{n\leq kN}(\Lambda'_{w_{N}, r_{N}}(n)F_{u}^{\delta'}(R_{\alpha}^{W_{N}n}x)-F_{l}^{\delta'}(R_{\alpha}^{W_{N}n}x))\bold{1}_{[1, N]}(n)F'_{N}(h^{n}x)
\\&\leq\mathbb{E}_{n\leq kN}(\Lambda'_{w_{N},r_{N}}-1)\bold{1}_{[N]}(n)F_{u}^{\delta'}(R_{\alpha}^{W_{N}n}x)F'_{N}(h^{n}x)
\\&\qquad+\mathbb{E}_{n\leq kN}(F_{u}^{\delta'}(R_{\alpha}^{W_{N}n}x)-F_{l}^{\delta'}(R_{\alpha}^{W_{N}n}x)).
\end{split}
\end{equation}
Similar to the discussion of the first case, by Proposition 10.2 of ~\cite{5}, the first term on the right hand side goes to 0 as $N\rightarrow\infty$. For the second term, by (\ref{t1}) we have
\begin{equation}\nonumber
\begin{split}
&\qquad \mathbb{E}_{n\leq kN}(F_{u}^{\delta'}(R_{\alpha}^{W_{N}n}x)-F_{l}^{\delta'}(R_{\alpha}^{W_{N}n}x))
\\&\leq \int_{\mathbb{T}}(F_{u}^{\delta'}-F_{l}^{\delta'})d\lambda+\delta'/4
\\&\leq\delta'/2.
\end{split}
\end{equation}
Thus the left hand side of (\ref{r2}) is less than $\delta'$. Similarly, the left hand side of (\ref{r2}) is greater than $-\delta'$.
This contradicts (\ref{mo}).
\end{proof}

\section{Convergence and Recurrence along Shifted Primes on $[n\alpha]$}

\subsection{Comparison with Averages along Integers}
The following proposition shows that the weighted averages along some properly chosen subsets of $[N]$ for $[n\alpha]$ are controlled by the Gowers norms
of a related function:

\begin{prop}\label{norm4alpha} Let $\alpha\in\mathbb{R}\backslash\mathbb{Q}$. Let $a_{i}(n)=[in\alpha]$ for all $1\leq i\leq k, n\in [N]$, or $a_{i}(n)=i[n\alpha]$ for all $1\leq i\leq k, n\in [N]$. Let $(X,\mathcal{X},\mu, T)$ be a
measure preserving system and $f_{0}, f_{1},\dots,f_{k}\in L^{\infty}(\mu)$. Let $b\colon  \mathbb{N}\rightarrow\mathbb{C}$ be a sequence of complex numbers
satisfying $b(n)/{n^{c}}\rightarrow 0$ for all $c>0$. Let $\xi\colon
\mathbb{T}\rightarrow\mathbb{R}$ be a function with absolute value bounded by 1 whose support is contained in an interval that does not contain any internal point of the form $\frac{i}{(k+1)!},0\leq i< (k+1)!$. Then
\begin{equation}\label{specialcode4}
\begin{split}
&\qquad\Bigl\Vert\frac{1}{N}\sum_{n=1}^{N}b(n)\xi(\{n\alpha\})f_{0}T^{a_{1}(n)}f_{1}\cdot\ldots\cdot T^{a_{k}(n)}f_{k}\Bigr\Vert_{L^{2}(\mu)}
\\&\ll\Bigl\Vert b(n)\xi(\{n\alpha\})\cdot\bold{1}_{[1, N]}\Bigr\Vert_{U^{k+1}(\mathbb{Z}_{(k+1)N})}+o_{N}(1).
\end{split}
\end{equation}
Furthermore, the implicit constant is independent of
$\{b(n)\}_{n\in\mathbb{N}}$ and the $o_{N}(1)$ term depends only on the integer $k$ and $\{b(n)\}_{n\in\mathbb{N}}$.
\end{prop}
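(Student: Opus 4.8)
The plan is to establish this as a ``generalized von Neumann'' estimate, controlling the average by a Gowers norm of the weight $w(n):=b(n)\xi(\{n\alpha\})$ via iterated use of Lemma \ref{van}. Since $f_0$ carries no shift, it factors out of the average at the cost of $\Vert f_0\Vert_\infty$, so it suffices to bound $\Vert\frac1N\sum_{n=1}^N w(n)\prod_{i=1}^k T^{a_i(n)}f_i\Vert_{L^2}$. I would prove, by induction on the number $j$ of shifted factors, the following uniform claim $Q(j)$: if $a_1,\dots,a_j$ are \emph{difference-stable on $I$} --- meaning $a_i(n+h)-a_i(n)$ is independent of $n$ on $\{n:\{n\alpha\},\{(n+h)\alpha\}\in I\}$, where $I$ is a fixed gap between consecutive points $\tfrac{i}{(k+1)!}$ --- then for any $g_1,\dots,g_j\in L^\infty(\mu)$ with $\Vert g_i\Vert_\infty\le1$ and any weight $w$ with $|w(n)|\ll_\varepsilon n^\varepsilon$ supported on $\{\{n\alpha\}\in I\}$, one has $\Vert\frac1N\sum_n w(n)\prod_{i=1}^j T^{a_i(n)}g_i\Vert_{L^2}\ll\Vert w\,\mathbf{1}_{[1,N]}\Vert_{U^{j+1}(\mathbb{Z}_{(j+1)N})}+o_N(1)$, with the implied constant and the error independent of $w$. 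The base case $Q(0)$ is the identity $|\frac1N\sum_{n\le N}w(n)|=\Vert w\,\mathbf{1}_{[1,N]}\Vert_{U^1(\mathbb{Z}_N)}$.

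For the inductive step I would apply Lemma \ref{van} to $v(n)=w(n)\prod_{i=1}^j T^{a_i(n)}g_i$. The diagonal term is $\ll\frac1{N^2}\sum_{n\le N}|w(n)|^2\ll_\varepsilon N^{2\varepsilon-1}=o_N(1)$, uniformly in the parameters, by the sub-polynomial bound on $b$. For the off-diagonal term, fix $h$ and consider $\mathbb{E}_n W_h(n)\int\prod_i T^{a_i(n+h)}g_i\cdot\overline{T^{a_i(n)}g_i}\,d\mu$ with $W_h(n)=w(n+h)\overline{w(n)}$. Difference-stability lets me write $T^{a_i(n+h)}g_i=T^{a_i(n)}(T^{d_i(h)}g_i)$ on the support of $W_h$, so the integrand becomes $\prod_i T^{a_i(n)}g_i'$ with $g_i':=T^{d_i(h)}g_i\cdot\overline{g_i}$; re-centering by $T^{-a_1(n)}$ makes the first factor unshifted, and Cauchy--Schwarz pulls out the fixed vector $g_1'$. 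What remains is a $(j-1)$-shifted-factor average with weight $W_h$ and forms $a_i-a_1$ ($2\le i\le j$), which are again difference-stable; the inductive hypothesis $Q(j-1)$ bounds it by $\Vert W_h\,\mathbf{1}\Vert_{U^{j}}+o_N(1)$.

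The hard part, and the reason for the hypothesis on $\xi$, is verifying difference-stability. On the support of $W_h$ both $\{n\alpha\}$ and $\{(n+h)\alpha\}$ lie in $I$, a gap of length $<\tfrac1{(k+1)!}$ avoiding every $\tfrac{i}{(k+1)!}$. For $a_i=c_i[n\alpha]$ this is immediate once $|I|<\tfrac12$: since $[(n+h)\alpha]-[n\alpha]=h\alpha+\{n\alpha\}-\{(n+h)\alpha\}$ is an integer in an interval of length $2|I|<1$ about $h\alpha$, it is the unique such integer and is independent of $n$. For $a_i=[c_in\alpha]$ one writes $[c_i(n+h)\alpha]-[c_in\alpha]=c_ih\alpha+\{c_in\alpha\}-\{c_i(n+h)\alpha\}$ and uses that $c_i\le k$ divides $(k+1)!$, so the dilate $c_iI$ contains no integer in its interior; hence $\{c_in\alpha\}$ does not wrap around as $n$ runs over the support and the difference is again the unique integer near $c_ih\alpha$. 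This class is closed under the differences $a_i-a_j$ produced by re-centering, and the argument is stable under iteration: after several applications of Lemma \ref{van} the weight is a product of translates of $w$ over a combinatorial cube, so its support confines every $\{(n+\epsilon\cdot h)\alpha\}$ to $I$ and the same no-wraparound reasoning applies at each stage. I expect this bookkeeping --- keeping all fractional parts inside one gap simultaneously for every relevant dilate --- to be the main technical obstacle.

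Finally, to close the induction I would average $\Vert W_h\,\mathbf{1}\Vert_{U^{j}}$ over $h$ and invoke Jensen together with the recursive definition of the Gowers norm. Since $W_h\,\mathbf{1}=(w\,\mathbf{1})_h\,\overline{w\,\mathbf{1}}$, convexity of $t\mapsto t^{2^{j}}$ gives $\mathbb{E}_h\Vert W_h\,\mathbf{1}\Vert_{U^{j}}\le\bigl(\mathbb{E}_h\Vert W_h\,\mathbf{1}\Vert_{U^{j}}^{2^{j}}\bigr)^{1/2^{j}}=\bigl(\Vert w\,\mathbf{1}\Vert_{U^{j+1}}^{2^{j+1}}\bigr)^{1/2^{j}}=\Vert w\,\mathbf{1}\Vert_{U^{j+1}}^{2}$, which telescopes the controlling norm to exactly $U^{j+1}$ rather than something larger. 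Combining with Lemma \ref{van} yields $\Vert\frac1N\sum_n v(n)\Vert^2\ll\Vert w\,\mathbf{1}\Vert_{U^{j+1}}^{2}+o_N(1)$, and taking square roots closes $Q(j)$. Working throughout in $\mathbb{Z}_{(j+1)N}$ gives the $(j+1)$-dimensional cubes $\{n+\epsilon\cdot h\}$ enough room to avoid wraparound, and since only finitely many (at most $k$) error terms arise, each an $o_N(1)$ depending only on $k$ and $\{b(n)\}$, the final estimate has the claimed form.
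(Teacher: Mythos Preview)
Your approach is essentially the paper's own: iterated van der Corput with re-centering, using the support condition on $\xi$ to force $a_i(n+h)-a_i(n)$ to be constant in $n$ on the relevant set, and then identifying the resulting nested average with the Gowers-norm expansion. The paper organizes the iteration as a single explicit Claim at level $t$ (shifts $a_i(n+\sum_j\epsilon_jh_j)-a_t(n)$ for $i>t$, with the $i=t$ factor absorbed into an $n$-independent $F$), whereas you package the same recursion as an abstract induction $Q(j)$ on the number of shifted factors; the two are equivalent once one notes that after $t$ steps your forms are $a_i-a_t$. Your verification of difference-stability in the two cases $i[n\alpha]$ and $[in\alpha]$ matches the paper's case analysis (the paper writes the constant as $[vh_r\alpha]+D_v(h_r)$ with $D_v$ the nearest integer to $v\{h_r\alpha\}$, which is exactly your ``unique integer near $c_ih\alpha$'').
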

\begin{rem}
The function $i[n\alpha]$ is generally easier to treat than $[in\alpha]$, and we can replace $(k+1)!$ with $2$ in the first case. But we write them
together since the proofs are similar.
\end{rem}

\begin{proof} Without loss of generality, we assume that all functions $f_{i}$ are bounded by 1. Before we prove the general case, we give an example for the case $k=1$ which is easier and illustrates the main idea.

By Lemma \ref{van}, the
Cauchy-Schwartz inequality and the invariance of $T$, we have
\begin{equation}\label{temp01}
\begin{split}
&\qquad \Bigl\Vert\frac{1}{N}\sum_{n=1}^{N}b(n)\xi(\{n\alpha\}){f_{0}}{T^{[n\alpha]}f_{1}}\Bigr\Vert^{4}_{L^{2}(\mu)}
\\&\ll\Bigl(\frac{1}{N}\sum_{h=1}^{N}\Bigl\Vert\frac{1}{N}\sum_{n=1}^{N-h}b(n)\xi(\{n\alpha\})\overline{b(n+h)\xi(\{(n+h)\alpha\})}T^{[n\alpha]}f_{0}\overline{f_{0}}f_{1}T^{[(n+h)\alpha]}\overline{f_{1}}\Bigr\Vert_{L^{1}(\mu)}\Bigr)^{2}+o_{N}(1)
\\&\ll\frac{1}{N}\sum_{h=1}^{N}\Bigl\Vert\frac{1}{N}\sum_{n=1}^{N-h}b(n)\xi(\{n\alpha\})\overline{b(n+h)\xi(\{(n+h)\alpha\})}T^{[n\alpha]}f_{0}\overline{f_{0}}f_{1}T^{[(n+h)\alpha]}\overline{f_{1}}\Bigr\Vert_{L^{1}(\mu)}^{2}+o_{N}(1)
\\&\ll\frac{1}{N}\sum_{h=1}^{N}\Bigl\Vert\frac{1}{N}\sum_{n=1}^{N-h}b(n)\xi(\{n\alpha\})\overline{b(n+h)\xi(\{(n+h)\alpha\})}f_{1}T^{[(n+h)\alpha]-[n\alpha]}\overline{f_{1}}\Bigr\Vert^{2}_{L^{2}(\mu)}+o_{N}(1),
\end{split}
\end{equation}
where $o_{N}(1)$ depends on $\{b(n)\}_{n\in\mathbb{N}}$. Suppose $\xi(x)$ is supported in an interval $I\subset(0,1), \vert I\vert\leq 1/2$. Notice that
$[(n+h)\alpha]-[n\alpha]=h\alpha-\{(n+h)\alpha\}+\{n\alpha\}$. Thus if $\{n\alpha\}, \{(n+h)\alpha\}\in I$, then $\{h\alpha\}<1/2$ implies $
[(n+h)\alpha]-[n\alpha]=[h\alpha],$ and $\{h\alpha\}>1/2 $ implies $ [(n+h)\alpha]-[n\alpha]=[h\alpha]+1$. Let
$A_{N}=\{h\in[N]\colon  \{h\alpha\}<1/2\}$. Then by splitting the summand of $h$ along $[N]$ into $A_{N}$ and $A_{N}^{c}$, we get
\begin{equation}\nonumber
\begin{split}
&\qquad
\frac{1}{N}\sum_{h=1}^{N}\Bigl\Vert\frac{1}{N}\sum_{n=1}^{N-h}b(n)\xi(\{n\alpha\})\overline{b(n+h)\xi(\{(n+h)\alpha\})}f_{1}T^{[(n+h)\alpha]-[n\alpha]}\overline{f_{1}}\Bigr\Vert^{2}_{L^{2}(\mu)}
\\&\leq\frac{1}{N}\sum_{h\in A_{N}}\Bigl\vert\frac{1}{N}\sum_{n=1}^{N-h}b(n)\xi(\{n\alpha\})\overline{b(n+h)\xi(\{(n+h)\alpha\})}\Bigr\vert^{2}
\\&\qquad +\frac{1}{N}\sum_{h\in A_{N}^{c}}\Bigl\vert\frac{1}{N}\sum_{n=1}^{N-h}b(n)\xi(\{n\alpha\})\overline{b(n+h)\xi(\{(n+h)\alpha\})}\Bigr\vert^{2}
\\&\ll\Bigl\Vert b(n)\xi(\{n\alpha\})\cdot\bold{1}_{[1, N]}\Bigr\Vert_{U^{2}(\mathbb{Z}_{2N})}^{4}.
\end{split}
\end{equation}
This finishes the proof of the case $k=1$.

We now give the proof of the general case. Write $a_{0}(n)=0$.
Denote $g(n)=b(n)\xi(\{n\alpha\})$ and
$g(n;h_{1},\dots,h_{t})=\prod_{\epsilon\in\{0,1\}^{t}}\mathcal{C}^{\vert\epsilon\vert}g(n+\sum_{j=1}^{t}\epsilon_{j}h_{j})$(Recall that $\mathcal{C}f(n)=\overline{f(n)}, \vert\epsilon\vert=\sum_{j=1}^{t}\epsilon_{j}$).

\textbf{Claim.} For any $0\leq t\leq k$, there exists a function $F(\cdot;h_{1},\dots, h_{t})\in L^{\infty}(\mu)$ bounded by 1 (this function depends
on $h_{1},\dots, h_{t}$ but not on $n$) such that
\begin{equation}\nonumber
\begin{split}
&\qquad\Bigl\Vert\frac{1}{N}\sum_{n=1}^{N}g(n)f_{0}T^{a_{1}(n)}f_{1}\cdot\ldots\cdot T^{a_{k}(n)}f_{k}\Bigr\Vert^{2^{k+1}}_{L^{2}(\mu)}
\\&\ll
\frac{1}{N^{t}}\sum_{1\leq h_{1},\dots,h_{t}\leq N}\Bigl\Vert\frac{1}{N}\sum_{n=1}^{N-h_{1}-\dots-h_{t}}g(n;h_{1},\dots,h_{t})
\\&\qquad F(\cdot;h_{1},\dots,
h_{t})\prod_{i=t+1}^{k}\prod_{\epsilon\in\{0,1\}^{t}}T^{a_{i}(n+\sum_{j=1}^{t}\epsilon_{j}h_{j})-a_{t}(n)}\mathcal{C}^{\vert\epsilon\vert}f_{i}\Bigr\Vert^{2^{k+1-t}}_{L^{2}(\mu)}+o_{N}(1).
\end{split}
\end{equation}

If the claim is true, let $t=k$, we get the left hand side of (\ref{specialcode4}) is bounded by
\begin{equation}\nonumber
\begin{split}
\Bigl(\frac{1}{N^{t}}\sum_{1\leq h_{1},\dots,h_{t}\leq
N}\Bigl\vert\frac{1}{N}\sum_{n=1}^{N-h_{1}-\dots-h_{k}}g(n;h_{1},\dots,h_{k})\Bigr\vert^{2}\Bigr)^{\frac{1}{2^{k+1}}}+o_{N}(1),
\end{split}
\end{equation}
which is exactly the right hand side of (\ref{specialcode4}) since this is the expansion of the Gowers norm $\Vert\cdot\Vert_{U^{k+1}(\mathbb{Z}_{(k+1)N})}$ of $g(n)\bold{1}_{[1, N]}(n)$. In order to prove the claim, it suffices to show that for any $1\leq t\leq k$, any $F'(\cdot;h_{1},\dots, h_{t-1})\in
L^{\infty}(\mu)$ bounded by 1, there exists $F(\cdot;h_{1},\dots, h_{t})\in L^{\infty}(\mu)$ bounded by 1, such that
\begin{equation}\label{specialcode2}
\begin{split}
&\qquad\Bigl\Vert\frac{1}{N}\sum_{n=1}^{N-h_{1}-\dots-h_{t-1}}g(n;h_{1},\dots,h_{t-1})
\\&\qquad F'(\cdot;h_{1},\dots,
h_{t-1})\prod_{i=t}^{k}\prod_{\epsilon\in\{0,1\}^{t-1}}T^{a_{i}(n+\sum_{j=1}^{t-1}\epsilon_{j}h_{j})-a_{t-1}(n)}\mathcal{C}^{\vert\epsilon\vert}f_{i}\Bigr\Vert^{2^{k+2-t}}_{L^{2}(\mu)}
\\&\ll
\frac{1}{N}\sum_{h_{t}=1}^{N}\Bigl\Vert\frac{1}{N}\sum_{n=1}^{N-h_{1}-\dots-h_{t}}g(n;h_{1},\dots,h_{t})
\\&\qquad F(\cdot;h_{1},\dots,
h_{t})\prod_{i=t+1}^{k}\prod_{\epsilon\in\{0,1\}^{t}}T^{a_{i}(n+\sum_{j=1}^{t}\epsilon_{j}h_{j})-a_{t}(n)}\mathcal{C}^{\vert\epsilon\vert}f_{i}\Bigr\Vert^{2^{k+1-t}}_{L^{2}(\mu)}+o_{N}(1),
\end{split}
\end{equation}
for any $1\leq h_{1},\dots,h_{t-1}\leq N$. Similar to (\ref{temp01}), by Lemma \ref{van}, the Cauchy-Schwartz inequality and the invariance of $T$, the left hand side of (\ref{specialcode2}) is bounded by $o_{N}(1)$ plus
\begin{equation}\nonumber
\begin{split}
&\frac{1}{N}\sum_{h_{t}=1}^{N}\Bigl\Vert\frac{1}{N}\sum_{n=1}^{N-h_{1}-\dots-h_{t}}g(n;h_{1},\dots,h_{t})
\\&\prod_{i=t}^{k}\prod_{\epsilon\in\{0,1\}^{t}}T^{a_{i}(n+\sum_{j=1}^{t}\epsilon_{j}h_{j})-a_{t}(n)-a_{t-1}(n+\epsilon_{t}h_{t})+a_{t-1}(n)}
\mathcal{C}^{\vert\epsilon\vert}f_{i}\Bigr\Vert^{2^{k+1-t}}_{L^{2}(\mu)},
\end{split}
\end{equation}
where $o_{N}(1)$ depends only on $t$ and $\vert g(n;h_{1},\dots,h_{t})\vert$ $\{b(n)\}_{n\in\mathbb{N}}$, and thus depends only on $k$ and $\{b(n)\}_{n\in\mathbb{N}}$ since $\xi$ is bounded by 1. So in order to prove the claim, it
suffices to show that if $g(n;h_{1},\dots,h_{t})\neq 0$,
\begin{equation}\nonumber
\begin{split}
 F=\prod_{\epsilon\in\{0,1\}^{t}}T^{a_{t}(n+\sum_{j=1}^{t}\epsilon_{j}h_{j})-a_{t}(n)-a_{t-1}(n+\epsilon_{t}h_{t})+a_{t-1}(n)}\mathcal{C}^{\vert\epsilon\vert}f_{i}
\end{split}
\end{equation}
  is a function bounded by 1 and is independent of the choice of $n$. It suffices to show that for any $\epsilon\in\{0,1\}^{t}$, if
  $g(n;h_{1},\dots,h_{t})\neq 0$, then value of
\begin{equation}\label{specialcode1}
\begin{split}
  a_{v}(n+\sum_{j=1}^{t}\epsilon_{j}h_{j})-a_{v}(n)
\end{split}
\end{equation}
 is independent of $n$, where $1\leq v\leq k$.
Suppose $\xi(x)$ is supported in an interval $I\subset\mathbb{T}$ where $\frac{i}{(k+1)!}$ is not an internal point of $I$ for all $0\leq i<(k+1)!$. We discuss case by case:

\textbf{Case that $a_{v}(n)=v[n\alpha]$.} Notice that
\begin{equation}\nonumber
\begin{split}
 v[(n+\sum_{j=1}^{t}\epsilon_{j}h_{j})\alpha]-v[n\alpha]=
 v\sum_{r=1}^{t}\Bigl([(n+\sum_{j=1}^{r}\epsilon_{j}h_{j})\alpha]-[(n+\sum_{j=1}^{r-1}\epsilon_{j}h_{j})\alpha]\Bigr).
\end{split}
\end{equation}
 If $g(n;h_{1},\dots,h_{t})\neq 0$, then for each $1\leq r\leq t$, we must have $\{(n+\sum_{j=1}^{r}\epsilon_{j}h_{j})\alpha\},\\
 \{(n+\sum_{j=1}^{r-1}\epsilon_{j}h_{j})\alpha\}\in I$. By the property of $I$, the length of $I$ does not exceed $\frac{1}{2}$ and $I$ does not contain 
 %any point $\frac{s}{2} (s=0,1)$ 
 0 as an internal point. Thus it is easy to verify that
\begin{equation}\nonumber
\begin{split}
 [(n+\sum_{j=1}^{r}\epsilon_{j}h_{j})\alpha]-[(n+\sum_{j=1}^{r-1}\epsilon_{j}h_{j})\alpha]=\epsilon_{r}([h_{r}\alpha]+D(h_{r})),
\end{split}
\end{equation}
where $D(h_{r})$ is the unique closest integer 
%such that $\{h_{r}\alpha\}\in(\frac{D(h_{r})}{2},\frac{D(h_{r})+1}{2})$
to $\{h_{r}\alpha\}$. This means $(\ref{specialcode1})$ is
independent of $n$ if $g(n;h_{1},\dots,h_{t})\neq 0$.

\textbf{Case that $a_{v}(n)=[vn\alpha]$.} Notice that
\begin{equation}\nonumber
\begin{split}
 [v(n+\sum_{j=1}^{t}\epsilon_{j}h_{j})\alpha]-[vn\alpha]=
 \sum_{r=1}^{t}\Bigl([v(n+\sum_{j=1}^{r}\epsilon_{j}h_{j})\alpha]-[v(n+\sum_{j=1}^{r-1}\epsilon_{j}h_{j})\alpha]\Bigr).
\end{split}
\end{equation}
 If $g(n;h_{1},\dots,h_{t})\neq 0$, then for each $1\leq r\leq t$, we must have $\{(n+\sum_{j=1}^{r}\epsilon_{j}h_{j})\alpha\},\\
 \{(n+\sum_{j=1}^{r-1}\epsilon_{j}h_{j})\alpha\}\in I$, so $\{v((n+\sum_{j=1}^{r}\epsilon_{j}h_{j}))\alpha\},
 \{v(n+\sum_{j=1}^{r-1}\epsilon_{j}h_{j})\alpha\}\in I_{v}\colon=\{vx\colon x\in I\}$. By the property of $I$, the length of $I_{v}$ does not exceed $\frac{1}{v+1}$ and
 $I_{v}$ does not contain %any point $\frac{s}{v+1} (0\leq s<v+1)$ 
 0 as an internal point. Thus it is easy to verify that
\begin{equation}\nonumber
\begin{split}
 [v(n+\sum_{j=1}^{r}\epsilon_{j}h_{j})\alpha]-[v(n+\sum_{j=1}^{r-1}\epsilon_{j}h_{j})\alpha]=\epsilon_{r}([vh_{r}\alpha]+D_{v}(h_{r})),
\end{split}
\end{equation}
where $D_{v}(h_{r})$ is the unique closest integer 
%such that $\{h_{r}\alpha\}\in(\frac{D_{v}(h_{r})}{v+1},\frac{D_{v}(h_{r})+1}{v+1})$
to $v\{h_{r}\alpha\}$. This means
$(\ref{specialcode1})$ is independent of $n$ if $g(n;h_{1},\dots,h_{t})\neq 0$.
\end{proof}

\subsection{Proof of Theorem \ref{alpha2}} We prove Theorem \ref{alpha2} in this subsection.
\begin{defn}
  Let $(X,\mathcal{X},\mu, T)$ be a measure preserving system. A factor $\mathcal{Z}$ is called a \emph{characteristic factor}, or \emph{characteristic}, for the family of integer sequences $\{a_{1}(n),\dots,a_{k}(n)\}$, if for any $f_{1},\dots,f_{k}\in L^{\infty}(\mu)$,at least one of which is orthogonal to $\mathcal{Z}$, the average
\begin{equation}\nonumber
\begin{split}
\frac{1}{N}\sum_{n=1}^{N}T^{a_{1}(n)}f_{1}(x)\cdot\ldots\cdot T^{a_{k}(n)}f_{k}(x)
\end{split}
\end{equation}
converges to 0 in $L^{2}(\mu)$ as $N\rightarrow\infty$.
\end{defn}

The following theorem proved plays an important role in the study of the characteristic factors:
\begin{thm}(Host and Kra, ~\cite{12})\label{st}
  For any $k\in\mathbb{N}$ and any measure preserving system, there exists a factor $\mathcal{Z}_{k}$ which is characteristic for $\{n,2n,\dots,kn\}$, and a.e. ergodic component of $\mathcal{Z}_{k}$ is an inverse limit of $k$-step nilsystems (See Section 2 for the definition).
\end{thm}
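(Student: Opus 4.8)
The plan is to follow the structure-theoretic strategy of Host and Kra, organized around a family of seminorms coming from ``cube measures.'' First I would construct, for each $k\geq 0$, a measure $\mu^{[k]}$ on $X^{2^{k}}$ by induction: set $\mu^{[0]}=\mu$, let $T^{[k]}$ act diagonally on $X^{2^{k}}$, and define $\mu^{[k+1]}$ to be the relatively independent self-joining of $\mu^{[k]}$ with itself over the $\sigma$-algebra of $T^{[k]}$-invariant sets. From these one sets
\[
\vert\vert\vert f\vert\vert\vert_{k}=\Bigl(\int_{X^{2^{k}}}\prod_{\epsilon\in\{0,1\}^{k}}\mathcal{C}^{\vert\epsilon\vert}f\,d\mu^{[k]}\Bigr)^{1/2^{k}},
\]
where $f$ is inserted in the $\epsilon$-th coordinate. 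The first task is to verify, by a Cauchy--Schwarz argument applied coordinate by coordinate across the cube (the ergodic analogue of the Cauchy--Schwarz--Gowers inequality), that each $\vert\vert\vert\cdot\vert\vert\vert_{k}$ is genuinely a seminorm and that the sequence is nondecreasing in $k$.

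Next I would define the factor $\mathcal{Z}_{k}$ by the requirement that $\mathbb{E}[f\mid\mathcal{Z}_{k}]=0$ if and only if $\vert\vert\vert f\vert\vert\vert_{k+1}=0$; such a factor exists because the functions of zero $(k+1)$-seminorm form a closed subspace stable under multiplication by bounded functions, so it is the orthocomplement of a sub-$\sigma$-algebra. To show $\mathcal{Z}_{k}$ is characteristic for $\{n,2n,\dots,kn\}$, I would prove by induction on $k$ that $\limsup_{N}\Vert A_{N}\Vert_{L^{2}(\mu)}\ll\vert\vert\vert f_{j}\vert\vert\vert_{k+1}$ for each $j$, where $A_{N}=\frac{1}{N}\sum_{n=1}^{N}T^{n}f_{1}\cdots T^{kn}f_{k}$. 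Applying Lemma \ref{van} to $v(n)=T^{n}f_{1}\cdots T^{kn}f_{k}$ and using the invariance of $T$, the inner products $\langle v(n+h),v(n)\rangle$ become $k$-fold averages of the differenced functions $T^{ih}f_{i}\cdot\overline{f_{i}}$; feeding these into the inductive hypothesis and averaging the resulting bound over $h$ reproduces exactly the cube expression defining $\vert\vert\vert f_{j}\vert\vert\vert_{k+1}$. Hence orthogonality of some $f_{j}$ to $\mathcal{Z}_{k}$ forces $A_{N}\to 0$ in $L^{2}(\mu)$.

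The main obstacle is the structure theorem: that almost every ergodic component of $\mathcal{Z}_{k}$ is an inverse limit of $k$-step nilsystems. By the ergodic decomposition I may assume the system is ergodic, and it suffices to treat a \emph{system of order $k$}, meaning one for which $\mathcal{Z}_{k}=X$. The strategy is to study the tower $\mathcal{Z}_{0}\subset\mathcal{Z}_{1}\subset\dots\subset\mathcal{Z}_{k}=X$, to show that each step $\mathcal{Z}_{j}\to\mathcal{Z}_{j-1}$ is a compact abelian group extension, and to prove that the cocycle defining this extension satisfies a generalized Conze--Lesigne functional equation, extracted by differencing the relations that the seminorms see along the cube measures. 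The hardest and most technical part is then to turn this rigidity into an explicit algebraic model: one must show the cocycles are cohomologous to ones of nilpotent type, control their behaviour under the successive differencing operators, assemble a $k$-step nilpotent Lie group $G$ with a cocompact subgroup $\Gamma$, and identify $X$ with an inverse limit of translations on $G/\Gamma$. I expect this last step---upgrading functional equations for cocycles into a genuine nilpotent group structure, together with the passage to the inverse limit needed to absorb residual infinite-dimensionality---to be by far the most difficult, since it is exactly the deep content of the Host--Kra analysis and is what makes this a cited black-box rather than something one would reprove here.
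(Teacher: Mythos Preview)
The paper does not prove this statement at all: Theorem~\ref{st} is quoted verbatim as a result of Host and Kra~\cite{12}, with the sentence ``We refer the reader to~\cite{12} for details about characteristic factors'' standing in for any argument. It is used purely as a black box in the proof of Proposition~\ref{or4alpha2}. So there is no ``paper's own proof'' to compare against.

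That said, your outline is an accurate high-level summary of the actual Host--Kra argument from~\cite{12}: the inductive construction of the cube measures $\mu^{[k]}$ and the seminorms $\vert\vert\vert\cdot\vert\vert\vert_{k}$, the definition of $\mathcal{Z}_{k}$ as the factor dual to the vanishing of $\vert\vert\vert\cdot\vert\vert\vert_{k+1}$, the van der Corput / Cauchy--Schwarz--Gowers induction that bounds the multiple average by a seminorm of any single $f_{j}$, and finally the structure theorem via the tower of abelian group extensions, Conze--Lesigne type functional equations for the cocycles, and the assembly of a nilpotent Lie group. Your assessment of where the difficulty lies is also correct: the first two pieces are relatively soft, while the passage from the cocycle functional equations to an honest nilmanifold model is the deep part and occupies the bulk of~\cite{12}. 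In the context of the present paper, however, none of this is expected of you; the theorem is simply invoked.
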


We refer the reader to ~\cite{12} for details about characteristic factors. The following proposition can be deduced directly from Proposition 4.1 of ~\cite{2}:
\begin{prop}\label{last}
Let $(X,\mathcal{X},\mu, T)$ be a measure preserving system. Let $\alpha\in\mathbb{R}\backslash\mathbb{Q}, a,b\in\mathbb{Z}$. Then there exists $l\in\mathbb{N}$ such that the factor $\mathcal{Z}_{l}$ in Theorem \ref{st} is characteristic for both $\{[(an+b)\alpha],\dots,k[(an+b)\alpha]\}$ and $\{[(an+b)\alpha],\dots,[k(an+b)\alpha]\}$.
\end{prop}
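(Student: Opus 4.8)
The plan is to realize each of the two families of exponents as an integer-valued generalized polynomial sequence and then to quote Proposition 4.1 of~\cite{2}, which supplies a Host--Kra factor that is characteristic for such a family; the only real work lies in reconciling the two resulting levels into a single $l$.

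First I would record the elementary observation that, for fixed $a,b\in\mathbb{Z}$ and $\alpha\in\mathbb{R}\backslash\mathbb{Q}$, each of the maps $n\mapsto i[(an+b)\alpha]$ and $n\mapsto[i(an+b)\alpha]$, $1\le i\le k$, is an integer-valued generalized polynomial in $n$ (a floor of a linear form, scaled in the first case by the integer $i$). This places both families $\{[(an+b)\alpha],\dots,k[(an+b)\alpha]\}$ and $\{[(an+b)\alpha],\dots,[k(an+b)\alpha]\}$ within the class of sequences to which Proposition 4.1 of~\cite{2} applies. Checking our explicit sequences against the precise hypotheses of that proposition is the one point that requires care.

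Next I would apply Proposition 4.1 of~\cite{2} separately to the two families. This yields integers $l_{1},l_{2}\in\mathbb{N}$ such that $\mathcal{Z}_{l_{1}}$ is characteristic for $\{[(an+b)\alpha],\dots,k[(an+b)\alpha]\}$ and $\mathcal{Z}_{l_{2}}$ is characteristic for $\{[(an+b)\alpha],\dots,[k(an+b)\alpha]\}$. I would then set $l=\max\{l_{1},l_{2}\}$ and invoke two standard facts: the Host--Kra factors are nested, $\mathcal{Z}_{l'}\subseteq\mathcal{Z}_{l}$ whenever $l'\le l$, and any factor containing a characteristic factor is again characteristic. The second fact holds because, if $f_{i}$ is orthogonal to the larger factor $\mathcal{Z}_{l}$, then it is orthogonal to the smaller $\mathcal{Z}_{l'}$ as well (since $L^{2}(\mathcal{Z}_{l'})\subseteq L^{2}(\mathcal{Z}_{l})$), so the limit defining characteristicity still vanishes. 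Applying this with $l'=l_{1}$ and $l'=l_{2}$ shows that $\mathcal{Z}_{l}$ is characteristic for both families simultaneously, which is the assertion; by Theorem~\ref{st} its ergodic components carry the inverse-limit-of-nilsystems structure needed later.

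The main, and essentially the only, obstacle is the bookkeeping in the first step: confirming that the generalized polynomials $i[(an+b)\alpha]$ and $[i(an+b)\alpha]$ genuinely meet the hypotheses of Proposition 4.1 of~\cite{2}. Once that translation is in place, the passage to a common level via the maximum and the upward closure of characteristic factors is routine, and the deduction is immediate.
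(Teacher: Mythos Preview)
Your approach is essentially the same as the paper's: both reduce the proposition to Proposition~4.1 of~\cite{2} and then take a single level $l$ that works for both families. The paper does not even spell out the $\max\{l_1,l_2\}$ step; it simply says the proposition ``can be deduced directly from Proposition~4.1 of~\cite{2}''.

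One point of care: you frame Proposition~4.1 of~\cite{2} as a statement about arbitrary integer-valued generalized polynomials, but~\cite{2} is Frantzikinakis' paper on Hardy sequences, and as the remark following the proposition in the present paper indicates, Proposition~4.1 of~\cite{2} is \emph{stated} only for families of the form $\{[(an+b)\alpha],\dots,k[(an+b)\alpha]\}$; the second family $\{[(an+b)\alpha],\dots,[k(an+b)\alpha]\}$ is not covered by the statement itself but is handled inside its proof. So your sentence ``apply Proposition~4.1 of~\cite{2} separately to the two families'' needs to be adjusted: for the second family you must appeal to the argument in~\cite{2} rather than to the stated conclusion. This is exactly the content of the paper's accompanying remark, and once you make that adjustment your proof is correct and matches the paper's.
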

\begin{rem}
Proposition 4.1 of ~\cite{2} is stated for the case $\{[(an+b)\alpha],\dots,k[(an+b)\alpha]\}$ but the proof of the case $\{[(an+b)\alpha],\dots,[k(an+b)\alpha]\}$ is included in the proof.
\end{rem}
The following result is partially due to Frantzikinakis ~\cite{eq}, Lemma 4.7:
\begin{prop}\label{nil}
Let $\alpha\in\mathbb{R}\backslash\mathbb{Q}, a,b\in\mathbb{Z}$. For any nilmanifold $X=G/\Gamma$, with $G$ connected and simply connected, any $g\in G, x_{0}\in X$ and any $F_{1}(x),\dots,F_{k}(x)\in C(X)$, the limit
\begin{equation}\nonumber
\begin{split}
 \lim_{N\rightarrow\infty}\frac{1}{N}\sum_{n=1}^{N}F_{1}(g^{[(an+b)\alpha]}x_{0})\cdot\ldots\cdot F_{k}(g^{k[(an+b)\alpha]}x_{0})
\end{split}
\end{equation}
exists. The conclusion still holds if $i[n\alpha]$ is replaced with $[in\alpha]$ for $i=1,\dots,k$.
\end{prop}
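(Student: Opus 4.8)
The plan is to realize the summand as a bounded (but discontinuous) observable evaluated along the orbit of a single translation on an auxiliary nilmanifold, and then to combine Leibman's equidistribution theorem with a sandwiching argument; this is the scheme underlying Frantzikinakis ~\cite{eq}, Lemma 4.7, which I would adapt to the present weighting. First I would reduce to one function and one group element: setting $\tilde{g}=(g,g^{2},\dots,g^{k})\in G^{k}$, $\tilde{x}_{0}=(x_{0},\dots,x_{0})$ and $\tilde{F}=F_{1}\otimes\dots\otimes F_{k}$, the average in question becomes $\frac{1}{N}\sum_{n=1}^{N}\tilde{F}(\tilde{g}^{\,[(an+b)\alpha]}\tilde{x}_{0})$ on the product nilmanifold $X^{k}=(G/\Gamma)^{k}$. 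Since $G$, hence $G^{k}$, is connected and simply connected nilpotent, the exponential map is a diffeomorphism, so there is a well-defined one-parameter subgroup $t\mapsto\tilde{g}^{\,t}$ with $\tilde{g}^{\,s}\tilde{g}^{\,t}=\tilde{g}^{\,s+t}$; this is precisely where the hypotheses on $G$ are used, and it is what lets me linearize the floor.

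Next, using $[(an+b)\alpha]=(an+b)\alpha-\{(an+b)\alpha\}$ together with the commutativity of the powers of $\tilde{g}$, I would write
\[
\tilde{g}^{\,[(an+b)\alpha]}\tilde{x}_{0}=\tilde{g}^{\,-\{(an+b)\alpha\}}\,\bigl(\tilde{g}^{\,(an+b)\alpha}\tilde{x}_{0}\bigr).
\]
The ``linear part'' is a genuine degree-one orbit: with $h=\tilde{g}^{\,a\alpha}$ and $z_{0}=\tilde{g}^{\,b\alpha}\tilde{x}_{0}$ one has $\tilde{g}^{\,(an+b)\alpha}\tilde{x}_{0}=h^{n}z_{0}$, while the fractional part is the orbit $\{(an+b)\alpha\}=R_{a\alpha}^{n}t_{0}$ of $t_{0}=b\alpha$ on $\mathbb{T}$. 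Thus on the product nilmanifold $Z=X^{k}\times\mathbb{T}$, with translation $S=L_{h}\times R_{a\alpha}$ and base point $w_{0}=(z_{0},t_{0})$, the summand equals $\Phi(S^{n}w_{0})$, where $\Phi(z,t)=\tilde{F}(\tilde{g}^{\,-\tau(t)}z)$ and $\tau(t)\in[0,1)$ is the canonical representative of $t$. The key structural point is that $\Phi$ is bounded and continuous off $X^{k}\times\{0\}$, where $\tau$ jumps, so it is Riemann integrable provided this exceptional set is negligible for the relevant measure.

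I would then apply Leibman's theorem on equidistribution of translations on nilmanifolds: the empirical measures $\frac{1}{N}\sum_{n=1}^{N}\delta_{S^{n}w_{0}}$ converge weak-$*$ to the Haar measure $\mu_{Z'}$ of the orbit closure $Z'=\overline{\{S^{n}w_{0}\}}$, so for every $\Psi\in C(Z)$ the averages $\frac{1}{N}\sum_{n}\Psi(S^{n}w_{0})$ converge to $\int_{Z'}\Psi\,d\mu_{Z'}$. To pass from continuous $\Psi$ to the discontinuous $\Phi$, I would sandwich $\Phi$ between continuous functions $\Phi_{-}\le\Phi\le\Phi_{+}$ with $\int_{Z'}(\Phi_{+}-\Phi_{-})\,d\mu_{Z'}$ as small as desired; this is possible exactly when the discontinuity set of $\Phi$ is $\mu_{Z'}$-null, and then $\liminf$ and $\limsup$ of the averages both equal $\int_{Z'}\Phi\,d\mu_{Z'}$, yielding convergence.

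The main obstacle, and the only place the irrationality of $\alpha$ genuinely enters, is verifying $\mu_{Z'}(X^{k}\times\{0\})=0$. I would settle this by projection: the pushforward of $\mu_{Z'}$ under $Z\to\mathbb{T}$ is the Haar measure of the orbit closure of $t_{0}$ under $R_{a\alpha}$, which for $a\neq0$ is all of $\mathbb{T}$ because $a\alpha$ is irrational, whence $\mu_{Z'}(X^{k}\times\{0\})=\lambda(\{0\})=0$; the degenerate case $a=0$ makes the sequence constant in $n$ and is trivial. For the variant in which $i[n\alpha]$ is replaced by $[in\alpha]$, the identical scheme applies with $Z=X^{k}\times\mathbb{T}^{k}$, the $i$-th factor carrying the linear orbit of $h_{i}=g^{\,ia\alpha}$ and the rotation $R_{ia\alpha}$ governing $\{i(an+b)\alpha\}$, and $\Phi=\prod_{i}F_{i}(g^{\,-\tau(t_{i})}z_{i})$; its discontinuity set lies in $\bigcup_{i}\{t_{i}=0\}$, each piece being $\mu_{Z'}$-null by the same projection argument since $ia\alpha$ is irrational for every $1\le i\le k$.
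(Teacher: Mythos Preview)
Your proposal is correct and follows essentially the same scheme as the paper's proof: lift to an auxiliary product nilmanifold, realise the summand as a bounded observable with a null discontinuity set along a single orbit, invoke Leibman's equidistribution theorem, and control the discontinuity set by projecting to the torus factor and using that $a\alpha$ (resp.\ $ia\alpha$) is irrational. The only cosmetic difference is packaging: the paper writes the orbit as the polynomial sequence $\hat{g}^{(an+b)\alpha}$ on $(\mathbb{R}^{k}\times G^{k})/(\mathbb{Z}^{k}\times\Gamma^{k})$ and appeals to Theorem~B of~\cite{L2}, whereas you absorb $\alpha$ into the translating element and work with a genuine translation $S^{n}$ on $X^{k}\times\mathbb{T}$ (resp.\ $X^{k}\times\mathbb{T}^{k}$); these are equivalent formulations of the same argument.
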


\begin{proof} The case $a=0$ is trivial, so in the rest of the proof we assume $a\neq 0$. Without loss of generality, we assume $x_{0}=\Gamma$.
  The case $\{[n\alpha],\dots,k[n\alpha]\}$ is Lemma 4.7 of ~\cite{eq}. So we only need to prove for the case $\{[n\alpha],\dots,[kn\alpha]\}$. Let
  $\hat{X}=\hat{G}/\hat{\Gamma}$, where $\hat{G}=\mathbb{R}^{k}\times G^{k}, \hat{\Gamma}=\mathbb{Z}^{k}\times\Gamma^{k}$ and $\hat{g}=(1,\dots,1,g,g^{2},\dots,g^{k})$. Then $\hat{G}$ is also connected and simply connected. It is well known that if $G$ is connected and simply connected, the exponential map $\exp(x)\colon\mathfrak{g}\rightarrow G$ is a bijection, where $\mathfrak{g}$ is the Lie algebra of $G$. So for any $t\in\mathbb{R}$, we may define $g^{t}=\exp(tA)$, where $A\in\mathfrak{g}$ is the unique element such that $g=\exp(A)$. We define a function $\hat{F}\colon\hat{X}\rightarrow\mathbb{C}$ by
\begin{equation}\nonumber
\begin{split}
 \hat{F}(t_{1}\mathbb{Z},\dots,t_{k}\mathbb{Z},g_{1}\Gamma,\dots,g_{k}\Gamma)=F_{1}(g^{-\{t_{1}\}}g_{1}\Gamma)\cdot F_{2}(g^{-\{2t_{2}\}}g_{2}\Gamma)\cdot\ldots\cdot F_{k}(g^{-\{kt_{k}\}}g_{k}\Gamma).
\end{split}
\end{equation}
(We caution the reader that $\hat{F}$ may not be continuous.) Notice that
\begin{equation}\nonumber
\begin{split}
 &\qquad\hat{F}(\hat{g}^{(an+b)\alpha}\hat{\Gamma})
 =F_{1}(g^{-\{(an+b)\alpha\}}g^{(an+b)\alpha}\Gamma)\cdot\ldots\cdot F_{k}(g^{-\{k(an+b)\alpha\}}g^{k(an+b)\alpha}\Gamma)
 \\&=F_{1}(g^{[(an+b)\alpha]}\Gamma)\cdot\ldots\cdot F_{k}(g^{[k(an+b)\alpha]}\Gamma),
\end{split}
\end{equation}
so it suffices to show that the limit
\begin{equation}\nonumber
\begin{split}
 \lim_{N\rightarrow\infty}\frac{1}{N}\sum_{n=1}^{N}\hat{F}(\hat{g}^{(an+b)\alpha}\hat{\Gamma})
\end{split}
\end{equation}
exists. For any $\delta>0$ (and sufficiently small) there exists a function $\hat{F}_{\delta}\in C(\hat{X})$ that agrees with $\hat{F}$ on $\hat{X}_{\delta}=I_{\delta}\times \hat{X}$, where $I_{\delta}=\{(t_{1}\mathbb{Z},\dots,t_{k}\mathbb{Z})\colon \min_{0\leq j<i-1}\bigl\vert t_{i}-\frac{j}{i}\bigr\vert\geq\delta, i=1,\dots,k\}$, and is uniformly bounded by $2\Vert \hat{F}\Vert_{\infty}$. Denote $I_{0}=\{t\mathbb{Z}: \min_{0\leq j<i\leq k}\bigl\vert t-\frac{j}{i}\bigr\vert\geq \delta\}$. If $t\in I_{0}$, then $(t,\dots,t)\in I_{\delta}$. Since $(\{(an+b)\alpha\})_{n\in\mathbb{N}}$ is uniformly distributed on $\mathbb{T}$, the density of the set of $n$ in $\{1,\dots,N\}$ (as $N\rightarrow\infty$) such that $\hat{F}(\hat{g}^{(an+b)\alpha}\hat{\Gamma})\neq\hat{F}_{\delta}(\hat{g}^{(an+b)\alpha}\hat{\Gamma})$ is at most $1-\vert I_{0}\vert=C_{k}\delta$ for some $C_{k}>0$ depending on $k$. So
\begin{equation}\label{temp3}
\begin{split}
 \limsup_{N\rightarrow\infty}\frac{1}{N}\sum_{n=1}^{N}\Bigl\vert\hat{F}(\hat{g}^{(an+b)\alpha}\hat{\Gamma})-\hat{F}_{\delta}
 (\hat{g}^{(an+b)\alpha}\hat{\Gamma})\Bigr\vert\leq 4C_{k}\Vert\hat{F}\Vert_{\infty}\delta.
\end{split}
\end{equation}
By Theorem B of ~\cite{L2}, the limit
\begin{equation}\nonumber
\begin{split}
 \lim_{N\rightarrow\infty}\frac{1}{N}\sum_{n=1}^{N}\hat{F}_{\delta}(\hat{g}^{(an+b)\alpha}\hat{\Gamma})
\end{split}
\end{equation}
exists. It is then easy to deduce from (\ref{temp3}) that $\{\frac{1}{N}\sum_{n=1}^{N}\hat{F}(\hat{g}^{(an+b)\alpha}\hat{\Gamma})\}_{N\in\mathbb{N}}$ is a Cauchy sequence. So the limit exists.
\end{proof}

With the help of all the above material, we are now able to prove the following multiple convergence result:
\begin{prop}\label{or4alpha2} Let $\alpha\in\mathbb{R}\backslash\mathbb{Q}, k\in\mathbb{N}$. For any measure preserving system $(X,\mathcal{X},\mu, T)$ and any
$f_{1},\dots,f_{k}\in L^{\infty}(\mu)$, the average
\begin{equation}\nonumber
\begin{split}
\frac{1}{N}\sum_{n=1}^{N}T^{[(an+b)\alpha]}f_{1}(x)\cdot\ldots\cdot T^{k[(an+b)\alpha]}f_{k}(x)
\end{split}
\end{equation}
converges in $L^{2}(\mu)$ as $N\rightarrow\infty$ for all integers $a, b$. The conclusion also holds if $i[(an+b)\alpha]$ is replaced with $[i(an+b)\alpha]$ for $i=1,\dots,k$.
\end{prop}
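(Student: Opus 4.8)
The plan is to reduce the convergence of the multiple ergodic average to the convergence of a multiple \emph{nilsequence} average via the characteristic factor machinery assembled just above. First I would invoke Proposition \ref{last}, which supplies an integer $l$ such that the Host--Kra factor $\mathcal{Z}_{l}$ is characteristic for the family $\{[(an+b)\alpha],\dots,k[(an+b)\alpha]\}$ (and for the $[i(an+b)\alpha]$ variant). Since any $f_{i}$ orthogonal to $\mathcal{Z}_{l}$ forces the average to zero in $L^{2}(\mu)$, a standard telescoping/multilinearity argument lets me replace each $f_{i}$ by its conditional expectation $\mathbb{E}(f_{i}\mid\mathcal{Z}_{l})$; hence it suffices to prove convergence when the system is $\mathcal{Z}_{l}$ itself.

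Next I would reduce from $\mathcal{Z}_{l}$ to a genuine nilsystem. By Theorem \ref{st}, almost every ergodic component of $\mathcal{Z}_{l}$ is an inverse limit of $l$-step nilsystems. Using the ergodic decomposition and the dominated convergence theorem, convergence on $\mathcal{Z}_{l}$ follows once it is established on each such component, and the inverse-limit structure together with an approximation argument (approximating $f_{i}\in L^{\infty}$ by functions measurable with respect to a single nilfactor, which are in turn approximated in $L^{2}$ by continuous functions) reduces matters to the case where $X=G/\Gamma$ is a nilmanifold and each $f_{i}=F_{i}\in C(X)$ is continuous. One may further assume $G$ is connected and simply connected, since every nilmanifold is a factor of such a one, so that the hypotheses of Proposition \ref{nil} are met.

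At this point the problem is pointwise on the nilmanifold: for fixed $x\in X$, the average $\frac{1}{N}\sum_{n=1}^{N}F_{1}(g^{[(an+b)\alpha]}x)\cdots F_{k}(g^{k[(an+b)\alpha]}x)$ converges by Proposition \ref{nil} (and likewise for the $[i(an+b)\alpha]$ version, which is the second half of that proposition's statement). To pass from pointwise convergence to convergence in $L^{2}(\mu)$, I would combine the uniform bound $\prod_{i}\Vert F_{i}\Vert_{\infty}$ with the dominated convergence theorem over $x\in X$, so that the $L^{2}$ limit is the integral of the pointwise limits.

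The main obstacle is the reduction to the nilsystem case, specifically the approximation step across the inverse limit. The functions $F_{i}$ in Proposition \ref{nil} must be continuous, whereas $\mathbb{E}(f_{i}\mid\mathcal{Z}_{l})$ is only an $L^{\infty}$ function on an inverse limit; one must choose the approximating nilfactor uniformly across the $k$ functions and control the accumulated $L^{2}$ error through the $k$-fold product, using the $L^{\infty}$ bounds on the remaining factors. Care is also needed because the weight $[(an+b)\alpha]$ depends on $\alpha$ nonlinearly, so the approximation estimates must be uniform in $N$; this is exactly where the uniform distribution of $(\{(an+b)\alpha\})_{n}$ and the bounded-variation control behind Proposition \ref{nil} do the work, and I would lean on those rather than re-deriving them.
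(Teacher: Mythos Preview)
Your proposal is correct and follows essentially the same route as the paper: reduce via Proposition~\ref{last} to $\mathcal{Z}_{l}$, use ergodic decomposition and the inverse-limit structure from Theorem~\ref{st} to reduce to a nilsystem with continuous $F_{i}$, pass to a connected simply connected Lie group, and invoke Proposition~\ref{nil}. One small imprecision: the reduction to connected and simply connected $G$ is not via ``every nilmanifold is a factor of such a one'' but rather via an embedding---after noting that $G/G_{0}$ may be assumed finitely generated (only finitely many group elements appear), one realizes $X$ as a sub-nilmanifold of some $\hat{X}=\hat{G}/\hat{\Gamma}$ with $\hat{G}$ connected and simply connected, so that each nilsequence $F(b^{n}x_{0})$ equals $\hat{F}(\hat{b}^{n}\hat{x}_{0})$ for suitable $\hat{F},\hat{b},\hat{x}_{0}$ (this is the standard device from~\cite{L2}).
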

\begin{rem}
  It is worth noting that the case $\{[n\alpha],\dots,k[n\alpha]\}$ is proved in Theorem 2.1 of ~\cite{2}.
\end{rem}

\begin{proof} The method follows from ~\cite{eq} and ~\cite{2}.
We can assume $f_{i}, 1\leq i\leq k$ are continuous by an approximation argument. By Proposition \ref{last}, we can replace the original space with $\mathcal{Z}_{l}$ for some $l\in\mathbb{N}$. Using an ergodic decomposition argument, it suffices to prove this proposition when the system is an inverse limit of nilsystems. By an approximation argument, we can simply consider the case when the system $X=G/\Gamma$ is a nilsystem.

We further simplify our discussion as follows (see ~\cite{L2}): since the average we deal with involves only finitely many actions on $X$, we may assume the discrete group $G/G_{0}$ is finitely generated, where $G_{0}$ is the connected component of $G$ containing the unit element $id_{G}$. In this case one can show that $X$ is isomorphic to a sub-nilmanifold of a nilmanifold $\hat{X}=\hat{G}/\hat{\Gamma}$, where $\hat{G}$ is connected and simply connected and for any $F\in C(X), b\in G, x_{0}\in X$, there exists $\hat{F}\in C(\hat{X}), \hat{b}\in \hat{G}, \hat{x}_{0}\in \hat{X}$ such that $F(b^{n}x_{0})=\hat{F}(\hat{b}^{n}\hat{x}_{0})$ for any $n\in\mathbb{N}$. So it suffices to consider the case when $G$ is connected and simply connected, and the conclusion follows from Proposition \ref{nil}.
\end{proof}

\begin{proof}[Proof of Theorem \ref{alpha2}] We only prove for the case $a_{i}(n)=i[n\alpha]$ since the other case follows analogously.
 By Lemma \ref{lambda2}, it suffices to prove the convergence in $L^{2}(\mu)$ for the corresponding averages
\begin{equation}\nonumber
\begin{split}
A(N)=\frac{1}{N}\sum_{n=1}^{N}\Lambda'(n)T^{[n\alpha]}f_{1}\cdot\ldots\cdot T^{k[n\alpha]}f_{k}.
\end{split}
\end{equation}
Equivalently, it suffices to show that the sequence of functions $\{A(N)\}_{N\in\mathbb{N}}$ is Cauchy in $L^{2}(\mu)$. Let $\epsilon>0$. Fix $w, r\in\mathbb{N}$ and let
\begin{equation}\nonumber
\begin{split}
B_{w,r}(N)=\frac{1}{N}\sum_{n=1}^{N}T^{[(Wn+r)\alpha]}f_{1}\cdot\ldots\cdot T^{k[(Wn+r)n\alpha]}f_{k}.
\end{split}
\end{equation}
Let
\begin{equation}\nonumber
\begin{split}
&A_{i}(N)=\frac{1}{N}\sum_{n=1}^{N}\Lambda'(n)\xi_{i}(\{n\alpha\})T^{[n\alpha]}f_{1}\dots T^{k[n\alpha]}f_{k},
\\&A_{i,w,r}(N)=\frac{1}{N}\sum_{n=1}^{N}\Lambda_{w,r}'(n)\xi_{i}(\{(Wn+r)\alpha\})T^{[(Wn+r)\alpha]}f_{1}\dots T^{k[(Wn+r)\alpha]}f_{k},
\\&B_{i,w,r}(N)=\frac{1}{N}\sum_{n=1}^{N}\xi_{i}(\{(Wn+r)\alpha\})T^{[(Wn+r)\alpha]}f_{1}\dots T^{k[(Wn+r)n\alpha]}f_{k},
\end{split}
\end{equation}
where $\xi_{i}(x)=\bold{1}_{(\frac{i-1}{(k+1)!},\frac{i}{(k+1)!})}(x)$.
By Propositions \ref{g} and \ref{norm4alpha}, we have that for any $w_{0}\in\mathbb{N}$ (and corresponding $W_{0}\in\mathbb{N}$) large enough and
any $1\leq r\leq W_{0},(r,W_{0})=1$, if $N$ is large enough, then
\begin{equation}\nonumber
\begin{split}
\Bigl\Vert A_{i,w_{0},r}(N)-B_{i,w_{0},r}(N)\Bigr\Vert_{L^{2}(\mu)}\leq\frac{\epsilon}{(k+1)!}.
\end{split}
\end{equation}
Since $A_{i}(W_{0}N)=\frac{1}{\phi(W_{0})}\sum_{1\leq r\leq W_{0},(r,W_{0})=1}A_{i,w_{0},r}(N)$, we deduce
\begin{equation}\nonumber
\begin{split}
\Bigl\Vert A_{i}(W_{0}N)-\frac{1}{\phi(W_{0})}\sum_{1\leq r\leq W_{0},(r,W_{0})=1}B_{i,w_{0},r}(N)\Bigr\Vert_{L^{2}(\mu)}\leq\frac{\epsilon}{(k+1)!}.
\end{split}
\end{equation}
Thus
\begin{equation}\nonumber
\begin{split}
\Bigl\Vert A(W_{0}N)-\frac{1}{\phi(W_{0})}\sum_{1\leq r\leq W_{0},(r,W_{0})=1}B_{w_{0},r}(N)\Bigr\Vert_{L^{2}(\mu)}\leq\epsilon,
\end{split}
\end{equation}
when $N$ is large. By Proposition \ref{or4alpha2}, for $r=1, \dots, W_{0}$, the sequence $\{B_{w_{0},r}(N)\}_{N\in\mathbb{N}}$ converges in
$L^{2}(\mu)$. Therefore, if $N'$ and $N$ are sufficiently large, then for $r=1, \dots, W_{0}$ we have
\begin{equation}\nonumber
\begin{split}
\Bigl\Vert B_{w_{0},r}(N)-B_{w_{0},r}(N')\Bigr\Vert_{L^{2}(\mu)}\leq\epsilon.
\end{split}
\end{equation}
Thus if $N$ is large enough,
\begin{equation}\nonumber
\begin{split}
\Bigl\Vert A(W_{0}N)-A(W_{0}N')\Bigr\Vert_{L^{2}(\mu)}
\leq\Bigl\Vert B_{w_{0},r}(N)-B_{w_{0},r}(N')\Bigr\Vert_{L^{2}(\mu)}+2\epsilon\leq 3\epsilon.
\end{split}
\end{equation}
Notice that for $r=1, \dots, W_{0}$,
\begin{equation}
\begin{split}
\lim_{N\rightarrow\infty}\Bigl\Vert A(W_{0}N+r)-A(W_{0}N)\Bigr\Vert_{L^{2}(\mu)}=0.
\end{split}
\end{equation}
Thus if $N$ and $N'$ are sufficiently large,
\begin{equation}
\begin{split}
\Bigl\Vert A(N)-A(N')\Bigr\Vert_{L^{2}(\mu)}\leq4\epsilon.
\end{split}
\end{equation}
Therefore the sequence $\{A(N)\}_{N\in\mathbb{N}}$ is Cauchy and this finishes the proof.
\end{proof}
\subsection{Proof of Theorem \ref{alpha}}
We prove Theorem \ref{alpha} in this subsection.
The following proposition is from Theorem 2.1 of ~\cite{10}:
\begin{prop}[Bergelson, Host, McCutcheon and Parreau, ~\cite{10}]\label{or4or} For any $\delta>0, k\in\mathbb{N}$, there exists $c(\delta), N(\delta)>0$, such that for any probability system $(X,\mathcal{X},\mu)$, any commuting measure preserving transformations $T_{1},\dots,T_{k}$,
and any $\mu(A)\geq\delta$, there exists $0<n<N(\delta)$ such that
\begin{equation}\nonumber
\begin{split}
\mu(A\cap T_{1}^{-n}A\cap\dots\cap T_{k}^{-n}A)>c(\delta).
\end{split}
\end{equation}
\end{prop}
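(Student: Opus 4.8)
The plan is to deduce this uniform statement from the qualitative multiple recurrence theorem of Furstenberg and Katznelson for commuting transformations, using a soft compactness argument to extract uniformity. Recall that the Furstenberg--Katznelson theorem guarantees that for commuting measure preserving transformations $S_{1},\dots,S_{k}$ of a probability space $(\Omega,\nu)$ and any $B$ with $\nu(B)>0$ one has $\liminf_{N\to\infty}\frac{1}{N}\sum_{n=1}^{N}\nu(B\cap S_{1}^{-n}B\cap\dots\cap S_{k}^{-n}B)>0$; in particular some $n>0$ yields a positive intersection. I would treat this qualitative statement as the deep input and obtain the uniform constants $c(\delta),N(\delta)$ by contradiction.

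Suppose the proposition fails for some $\delta>0$ and $k$. Negating the statement with the choices $c=1/j$ and $N=j$, for every $j\in\mathbb{N}$ there is a probability system $(X_{j},\mathcal{X}_{j},\mu_{j})$ with commuting transformations $T_{1}^{(j)},\dots,T_{k}^{(j)}$ and a set $A_{j}$ with $\mu_{j}(A_{j})\geq\delta$ such that $\mu_{j}(A_{j}\cap (T_{1}^{(j)})^{-n}A_{j}\cap\dots\cap (T_{k}^{(j)})^{-n}A_{j})\leq 1/j$ for all $0<n<j$. The idea is to transport all of these counterexamples into a single limiting system in which multiple recurrence fails outright. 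To this end I would encode each system symbolically: let $\Omega=\{0,1\}^{\mathbb{Z}^{k}}$, equipped with the commuting coordinate shifts $S_{1},\dots,S_{k}$, which form a continuous $\mathbb{Z}^{k}$-action on a compact metric space, and define $\phi_{j}\colon X_{j}\to\Omega$ by $\phi_{j}(x)(v)=\mathbf{1}_{A_{j}}\bigl((T_{1}^{(j)})^{v_{1}}\cdots (T_{k}^{(j)})^{v_{k}}x\bigr)$ for $v=(v_{1},\dots,v_{k})\in\mathbb{Z}^{k}$. Because the $T_{i}^{(j)}$ commute and preserve $\mu_{j}$, the map $\phi_{j}$ intertwines each $T_{i}^{(j)}$ with $S_{i}$, and the pushforward $\nu_{j}=(\phi_{j})_{*}\mu_{j}$ is a shift-invariant probability measure on $\Omega$. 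By weak-$*$ compactness of the space of probability measures on the compact space $\Omega$, I would pass to a subsequence along which $\nu_{j}\to\nu$; the limit $\nu$ is again invariant under each $S_{i}$.

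The contradiction then comes from evaluating $\nu$ on cylinder sets, whose indicators are continuous. Let $B=\{\omega\in\Omega:\omega(0)=1\}$, a clopen set, so that $\nu(B)=\lim_{j}\nu_{j}(B)=\lim_{j}\mu_{j}(A_{j})\geq\delta$. For each fixed $n>0$ the set $B\cap S_{1}^{-n}B\cap\dots\cap S_{k}^{-n}B$ is a finite intersection of clopen sets, hence clopen, so $\nu(B\cap S_{1}^{-n}B\cap\dots\cap S_{k}^{-n}B)=\lim_{j}\nu_{j}(B\cap S_{1}^{-n}B\cap\dots\cap S_{k}^{-n}B)=\lim_{j}\mu_{j}(A_{j}\cap (T_{1}^{(j)})^{-n}A_{j}\cap\dots\cap (T_{k}^{(j)})^{-n}A_{j})$. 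Since this last quantity is at most $1/j$ once $j>n$, the limit equals $0$ for every $n>0$. Thus $(\Omega,\nu,S_{1},\dots,S_{k})$ satisfies $\nu(B)\geq\delta>0$ yet admits no positive multiple return, contradicting the Furstenberg--Katznelson theorem. This forces the existence of uniform constants $c(\delta),N(\delta)$.

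I expect the main obstacle to be bookkeeping rather than conceptual, concentrated in two places. First, one must verify carefully that $\phi_{j}$ genuinely intertwines the two $\mathbb{Z}^{k}$-actions, which is exactly where commutativity of the $T_{i}^{(j)}$ is used, and that the clopen values pass to the weak-$*$ limit. Second, and most importantly, one must confirm that the diagonal direction $n\mapsto(n,\dots,n)$ inside the $\mathbb{Z}^{k}$-action on $\Omega$ is precisely the direction controlled by Furstenberg--Katznelson, so that the qualitative input applies to the limit system with the \emph{same} $n$ across all the $S_{i}$. Granting these, the deep content is entirely absorbed into the cited recurrence theorem, and the upgrade to uniformity is the compactness step above.
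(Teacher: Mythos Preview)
Your argument is correct. The paper itself does not prove this proposition; it simply quotes it as Theorem~2.1 of Bergelson, Host, McCutcheon and Parreau~\cite{10} and uses it as a black box in the proof of Proposition~\ref{or4alpha}. So there is no ``paper's own proof'' to compare against here.

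That said, what you have written is exactly one of the standard routes to such uniform recurrence statements: encode the putative counterexamples symbolically on $\{0,1\}^{\mathbb{Z}^{k}}$, pass to a weak-$*$ limit of the pushforward measures, and obtain a single $\mathbb{Z}^{k}$-system violating the qualitative Furstenberg--Katznelson theorem. The verifications you flag (that $\phi_{j}$ intertwines the actions, that cylinder sets are clopen so their measures pass to the limit, and that the limiting measure remains shift-invariant) all go through as you describe. Your remark about the ``diagonal direction'' is fine but perhaps overcautious: the Furstenberg--Katznelson theorem is already phrased for $B\cap S_{1}^{-n}B\cap\dots\cap S_{k}^{-n}B$ with the same $n$ in each coordinate, which is exactly the configuration you produce.

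For context, the proof in~\cite{10} proceeds somewhat differently, but the compactness argument you give is a well-known alternative and is arguably the cleanest way to extract uniformity from qualitative multiple recurrence.
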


We need to use the following uniform multiple recurrence result along integers:

\begin{prop}\label{or4alpha} Let $k\in\mathbb{N}$. For any $\delta>0$, there exists $c(\delta)>0$ such that for any $\alpha\in\mathbb{R}\backslash\mathbb{Q}$, any measure
preserving system $(X,\mathcal{X},\mu, T)$ and any $\mu(A)\geq\delta$, we have
\begin{equation}\nonumber
\begin{split}
\liminf_{N\rightarrow\infty}\frac{1}{N}\sum_{n=1}^{N}\mu(A\cap T^{-[n\alpha]}A\cap\dots\cap T^{-k[n\alpha]}A)>c(\delta).
\end{split}
\end{equation}
The conclusion also holds if $i[n\alpha]$ is replaced with $[in\alpha]$ for $i=1,\dots,k$.
\end{prop}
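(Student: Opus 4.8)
The plan is to treat the two families separately, the family $a_i(n)=i[n\alpha]$ being the easier one, and to \emph{linearize} the floor sequence by passing to a skew product over the rotation $R_\alpha$, so that the uniformity in $\alpha$ is inherited for free from the system-independence of the constant in Proposition \ref{or4or}. First I would dispose of $a_i(n)=i[n\alpha]$. Here $A\cap T^{-[n\alpha]}A\cap\dots\cap T^{-k[n\alpha]}A$ is an arithmetic progression of length $k+1$ with common difference $d=[n\alpha]$ inside the single system $(X,\mathcal{X},\mu,T)$; writing $G(d)=\mu(A\cap T^{-d}A\cap\dots\cap T^{-kd}A)\ge 0$, the quantity to bound is $\frac1N\sum_{n=1}^N G([n\alpha])$. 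Feeding the commuting transformations $T,T^2,\dots,T^k$ into Proposition \ref{or4or} yields a lower bound $c(\delta)$, depending only on $\delta$ and $k$, for the progression average along the integers. Since $(\{n\alpha\})_{n}$ is equidistributed and $G\ge 0$, the number of $n\le N$ with $[n\alpha]=d$ averages to $1/\alpha$ with controlled discrepancy, so $\frac1N\sum_{n\le N}G([n\alpha])$ is asymptotic to $\frac1D\sum_{d\le D}G(d)$ with $D=\alpha N$; the factors $1/\alpha$ and $\alpha$ cancel, giving the same constant $c(\delta)$ uniformly in $\alpha$.

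For $a_i(n)=[in\alpha]$ the pattern no longer has a single common difference, so I introduce the skew product $(X\times\mathbb{T},\mu\times\lambda,S)$ with $S(x,t)=(T^{[t+\alpha]}x,\{t+\alpha\})$, which is measure preserving and satisfies $S^n(x,t)=(T^{[t+n\alpha]}x,\{t+n\alpha\})$. The powers $S,S^2,\dots,S^k$ commute, and for $\tilde A=A\times\mathbb{T}$ (so $(\mu\times\lambda)(\tilde A)=\mu(A)\ge\delta$) a point $(x,t)$ lies in $\bigcap_{i=0}^k S^{-in}\tilde A$ exactly when $T^{[t+in\alpha]}x\in A$ for all $i$, whence $(\mu\times\lambda)\big(\bigcap_{i=0}^k S^{-in}\tilde A\big)=\int_0^1\mu\big(\bigcap_{i=0}^k T^{-[t+in\alpha]}A\big)\,dt$. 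Applying Proposition \ref{or4or} to the commuting family $S,\dots,S^k$ and the set $\tilde A$ produces a lower bound $c(\delta)$ for the corresponding progression average in the skew product. The crucial point is that this constant depends only on $\delta$ and $k$ and not on the ambient system; since all the $\alpha$-dependence has been absorbed into $S$, the bound is automatically uniform in $\alpha$.

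It then remains to transfer this skew-product bound to the target average $\frac1N\sum_n\mu\big(\bigcap_{i=0}^k T^{-[in\alpha]}A\big)$, i.e. to the fiber $t=0$. My device is to show that $\lim_N\frac1N\sum_{n=1}^N\mu\big(\bigcap_{i=0}^k T^{-[t+in\alpha]}A\big)$ exists for \emph{every} $t$ (via the convergence results of this section, Propositions \ref{or4alpha2} and \ref{nil}) and is \emph{independent of the phase} $t$: after reducing to a nilsystem the limit is given by an integral over the relevant orbit closure that does not see the base point. Integrating in $t$ then identifies the $t=0$ value with $\lim_N(\mu\times\lambda)\big(\bigcap_i S^{-in}\tilde A\big)\ge c(\delta)$, and an analogous argument finishes the $i[n\alpha]$ case.

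The hard part will be exactly this transfer, on two counts. One cannot simply restrict an almost-everywhere fiber statement to the measure-zero fiber $t=0$, so establishing the phase-independence of the limit through the nilsystem reduction is delicate; and Proposition \ref{or4or} as stated produces only a single good return time, so it must be upgraded to a genuine lower bound for the full average before it can be combined with the convergence of the averages. By contrast, the uniformity in $\alpha$ — which would naively be the most dangerous point — is handled for free by the system-independence of the constant in Proposition \ref{or4or}.
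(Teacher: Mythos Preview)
Your plan diverges substantially from the paper's argument and contains a genuine gap.

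For the family $a_i(n)=i[n\alpha]$ you claim that, since $G\ge 0$ and the number of $n\le N$ with $[n\alpha]=d$ ``averages to $1/\alpha$ with controlled discrepancy'', one has $\frac1N\sum_{n\le N}G([n\alpha])\sim \frac1D\sum_{d\le D}G(d)$. This is false once $\alpha>1$: the counts are then $0$ or $1$, the Beatty sequence $\{[n\alpha]\}$ misses a positive proportion of integers, and nothing in your argument prevents $G$ from being supported on the missed integers. (Take $G=\mathbf 1_{\mathbb N\setminus\{[n\alpha]:n\ge 1\}}$ to see that the transfer fails for bounded nonnegative $G$; you have not used any special structure of $G(d)=\mu(A\cap T^{-d}A\cap\dots\cap T^{-kd}A)$ that would rule this out.) Even for $0<\alpha<1$ your ``the factors $1/\alpha$ and $\alpha$ cancel'' only yields a constant depending on $\alpha$ through the ratio $\lfloor 1/\alpha\rfloor\alpha$, so the uniformity claim needs more care. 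The skew-product route for $[in\alpha]$ is more defensible, but as you yourself note, it rests on a phase-independence statement (that $\lim_N\frac1N\sum_n\mu\bigl(\bigcap_i T^{-[t+in\alpha]}A\bigr)$ does not depend on $t$) which is not proved and would require a genuine nilsystem analysis; and you still need to upgrade Proposition~\ref{or4or} from a single good time to an average bound before you can use it.

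The paper bypasses all of this with a short elementary trick, following Bergelson--H\r{a}land. Set $q(n)=[n\alpha]$, $p_m(n)=n[m\alpha]$, and observe that if $\{m\alpha\}<\frac{1}{kN(\delta)}$ then $[inm\alpha]=in[m\alpha]$ for all $1\le i\le k$, $1\le n\le N(\delta)$; on this arithmetic-progression window the sequence is \emph{exactly} linear, so Proposition~\ref{or4or} applied to $T_i=T^{i[m\alpha]}$ produces some $n\le N(\delta)$ with $\mu\bigl(A\cap T^{-[nm\alpha]}A\cap\dots\cap T^{-[knm\alpha]}A\bigr)>c(\delta)$. Since the set of such $m$ has density $1/(kN(\delta))$, summing over $m$ and $n$ gives
\[
\liminf_{N\to\infty}\frac1N\sum_{n=1}^N\mu\bigl(A\cap T^{-q(n)}A\cap\dots\cap T^{-q(kn)}A\bigr)\ \ge\ \frac{c(\delta)}{kN(\delta)^3},
\]
which is uniform in $\alpha$ because $c(\delta)$ and $N(\delta)$ are. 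The same computation with $iq(n)$ in place of $q(in)$ handles the $i[n\alpha]$ family. No skew products, no phase independence, and no separate upgrade of Proposition~\ref{or4or} are needed.
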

\begin{rem}
  Notice the $c(\delta)$ in this theorem does not depend on $\alpha$.
\end{rem}
\begin{proof}The proof follows the method used in ~\cite{11}.
Denote $q(n)=[n\alpha], p_{m}(n)=[m\alpha]n$. For any $r\in\mathbb{N}$, denote $S_{r}=\{m\in\mathbb{N}\colon  \{m\alpha\}<1/kr\}$. It is easy to see if
$m\in S_{r}$, then $q(inm)=p_{m}(in)=iq(nm), 1\leq n\leq r, 1\leq i\leq k$. Let $c(\delta), N(\delta)>0$ be the same as in Proposition \ref{or4or}
and let $T_{i}=T^{i[m\alpha]}$. We assume without loss of generality that $N(\delta)\in\mathbb{N}$. Then there exists $1\leq n\leq N(\delta)$, such that $\mu(A\cap T^{-p_{m}(n)}A\cap\dots\cap T^{-p_{m}(kn)}A)>c(\delta)$.
Clearly,
\begin{equation}\nonumber
\begin{split}
&\qquad\liminf_{N\rightarrow\infty}\frac{1}{N}\sum_{n=1}^{N}\mu(A\cap T^{-q(n)}A\cap\dots\cap T^{-q(kn)}A)
\\&\geq\liminf_{N\rightarrow\infty}\frac{1}{N}\sum_{m=1}^{[\frac{N}{n}]}\mu(A\cap T^{-q(mn)}A\cap\dots\cap T^{-q(kmn)}A),
\end{split}
\end{equation}
for any $n\in\mathbb{N}$. Notice that the set $S_{N(\delta)}$ has density $1/kN(\delta)$, thus
\begin{equation}\nonumber
\begin{split}
&\qquad\liminf_{N\rightarrow\infty}\frac{1}{N}\sum_{n=1}^{N}\mu(A\cap T^{-q(n)}A\cap\dots\cap T^{-q(kn)}A)
\\&\geq\frac{1}{N(\delta)}\liminf_{N\rightarrow\infty}\frac{1}{N}\sum_{m=1}^{[\frac{N}{N(\delta)}]}\sum_{n=1}^{N(\delta)}\mu(A\cap
T^{-q(mn)}A\cap\dots\cap T^{-q(kmn)}A)
\\&\geq\frac{1}{N(\delta)}\liminf_{N\rightarrow\infty}\frac{1}{N}\sum_{m\in
S_{N(\delta)}\cap\{1,\dots,[\frac{N}{N(\delta)}]\}}\sum_{n=1}^{N(\delta)}\mu(A\cap T^{-p_{m}(n)}A\cap\dots\cap T^{-p_{m}(kn)}A)
\\&\geq \frac{c(\delta)}{kN(\delta)^{3}}>0.
\end{split}
\end{equation}
This finishes the proof of the case $\{[n\alpha],\dots,[kn\alpha]\}$.
The proof of the case
$\{[n\alpha],\dots,\\k[n\alpha]\}$ follows by replacing $q(in)$ with $iq(n)$.
\end{proof}

\begin{proof}[Proof of Theorem \ref{alpha}] We only need to prove the situation for $\mathbb{P}-1$ since the other case is similar.
By Proposition \ref{or4alpha}, there exists a positive constant $c$ depending only on $\mu(A)$ and $k$ such that for any $W>0$
\begin{equation}\nonumber
\begin{split}
&\qquad
\sum_{i=1}^{(k+1)!}\limsup_{N\rightarrow\infty}\frac{1}{N}\sum_{n=1}^{N}\xi_{i}(\{nW\alpha\})\mu(A\cap T^{-[nW\alpha]}A\cap\dots\cap
T^{-[knW\alpha]}A)
\\&=\limsup_{N\rightarrow\infty}\frac{1}{N}\sum_{n=1}^{N}\mu(A\cap T^{-[nW\alpha]}A\cap\dots\cap T^{-[knW\alpha]}A)>c,
\end{split}
\end{equation}
where $\xi_{i}(x)=\bold{1}_{(\frac{i-1}{(k+1)!},\frac{i}{(k+1)!})}(x)$.
So one term on the left hand side is larger than $\frac{c}{
(k+1)!}$. Suppose this is the term corresponding to $ \xi_{i}, i\in\{1,\dots,(k+1)!\}$.

If we apply Proposition \ref{norm4alpha} with $W\alpha, f_{0}=\dots=f_{k}=\bold{1}_{A}, b(n)=\Lambda'_{w,1}(n)-1, \xi(x)=\xi_{i}(x)$ and
use the fact that when $w$ (and corresponding $W$) large enough, $\bigl\Vert (\Lambda'_{w,1}(n)-1)\xi_{i}(\{nW\alpha\})\cdot\bold{1}_{[1, N]}\bigr\Vert_{U^{k}(\mathbb{Z}_{kN})}$ can be sufficiently small as
$N\rightarrow\infty$ (by Proposition \ref{g}), we can deduce that if $w$ is large enough, then
\begin{equation}\nonumber
\begin{split}
\limsup_{N\rightarrow\infty}\frac{1}{N}\sum_{n=1}^{N}\Lambda'_{w,1}(n)\xi_{i}(\{nW\alpha\})\mu(A\cap T^{-[nW\alpha]}A\cap\dots\cap
T^{-[knW\alpha]}A)>0.
\end{split}
\end{equation}
This finishes the proof of the case $\{[n\alpha],\dots,[kn\alpha]\}$ by Lemma \ref{lambda2}. The proof of the case $\{[n\alpha],\dots,k[n\alpha]\}$ can be obtained by replacing $T^{[jnW\alpha]}$ with $T^{j[nW\alpha]}$ in the discussion.
\end{proof}

\section{Other Applications}

\subsection{Cube Averages along Shifted Primes}
Every element $\epsilon\in\{0,1\}^{d}$ is identified as a sequence $\epsilon=(\epsilon_{1},\dots,\epsilon_{d}),
\epsilon_{i}\in\{0,1\}$. Write $\{0,1\}^{d}_{*}=\{0,1\}^{d}\backslash\{(0,\dots,0)\}$. The convergence result along cubes along integers was obtained
in ~\cite{8}. We indicate how the same method used for proving Theorem \ref{alpha} can be applied for the
averages along cubes along primes:

\begin{thm}\label{cube} For any $k\in\mathbb{N}$, any measure preserving system $(X,\mathcal{X},\mu, T)$ and any functions $f_{\epsilon}\in L^{\infty}(\mu),
\epsilon\in\{0,1\}^{k}_{*}$, the average
\begin{equation}\nonumber
\begin{split}
\frac{1}{\pi(N)^{k}}\sum_{p_{1},\dots,p_{k}\in
[N]\cap\mathbb{P}}\prod_{\epsilon\in\{0,1\}^{k}_{*}}T^{p_{1}\epsilon_{1}+\dots+p_{k}\epsilon_{k}}f_{\epsilon}
\end{split}
\end{equation}
converges in $L^{2}(\mu)$ as $N\rightarrow\infty$.
\end{thm}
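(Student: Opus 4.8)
The plan is to mirror the proof of Theorem~\ref{alpha2}: replace the prime average by a von Mangoldt weighted average, run the $W$-trick to regularize the weight, control the error made by replacing the normalized weight by its mean using the Gowers-norm estimate of Proposition~\ref{g}, and finally invoke the known convergence of the cube averages along the integers from~\cite{8}. First I would apply the $k$-variable form of Lemma~\ref{lambda2} to reduce the statement to the convergence in $L^{2}(\mu)$ of
\[
\frac{1}{N^{k}}\sum_{n_{1},\dots,n_{k}=1}^{N}\Big(\prod_{i=1}^{k}\Lambda'(n_{i})\Big)\prod_{\epsilon\in\{0,1\}^{k}_{*}}T^{n_{1}\epsilon_{1}+\dots+n_{k}\epsilon_{k}}f_{\epsilon}.
\]
As in Theorem~\ref{alpha2} I would fix $w$ (with $W=\prod_{p<w}p$) and, writing $n_{i}=Wm_{i}+r_{i}$ with $(r_{i},W)=1$, split the sum into the $\phi(W)^{k}$ residue classes; on each class the weight becomes $\prod_{i}\Lambda'_{w,r_{i}}(m_{i})$ up to the normalization $\phi(W)/W$. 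It then suffices to show that, for every fixed $w$ large enough, the weighted cube average over $\{Wm_{i}+r_{i}\}$ is uniformly close in $L^{2}(\mu)$ to the \emph{unweighted} cube average over the same progressions, which converges by~\cite{8} applied to the transformation $T^{W}$ (the shifts $r_{i}$ being absorbed into fixed powers of $T$ acting on the $f_{\epsilon}$). Averaging over the admissible residues and then letting $w\to\infty$ would give that the full sequence is Cauchy, exactly as in the linear case.

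Second, to compare the weighted and unweighted cube averages I would telescope the product $\prod_{i}\Lambda'_{w,r_{i}}(m_{i})=\prod_{i}\big(1+(\Lambda'_{w,r_{i}}(m_{i})-1)\big)$ and estimate each resulting term. Every term other than the all-ones term carries at least one distinguished factor $\Lambda'_{w,r_{i}}-1$ in some variable $i$, while the remaining variables carry a weight equal to $1$ or to the unbounded $\Lambda'_{w,r_{j}}$. The crux is a cube analogue of Proposition~\ref{norm4alpha}: the $L^{2}(\mu)$ norm of such a term should be controlled by a Gowers norm of the distinguished factor. I would first pass to the characteristic factor for cube averages, so that (by the structure theory of~\cite{8} together with Theorem~\ref{st}, followed by ergodic decomposition and approximation exactly as in Proposition~\ref{or4alpha2}) the system may be taken to be a nilsystem and the functions $f_{\epsilon}$ continuous. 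On a nilsystem the product $\prod_{\epsilon}T^{n\cdot\epsilon}f_{\epsilon}$, regarded as a function of the distinguished variable $m_{i}$ with all other variables frozen, is a basic nilsequence with Lipschitz constant bounded uniformly in the frozen variables.

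Third, with this reduction in hand I would apply the van der Corput Lemma~\ref{van} in the distinguished variable $m_{i}$. Expanding the inner product $\langle v(m_{i}+h),v(m_{i})\rangle$ produces, for each fixed shift $h$ and each fixed value of the remaining variables, a bounded nilsequence in $m_{i}$; pairing it against $(\Lambda'_{w,r_{i}}-1)(m_{i})\overline{(\Lambda'_{w,r_{i}}-1)(m_{i}+h)}$ reduces the estimate to a quantity of the form $\big\|(\Lambda'_{w,r_{i}}-1)\mathbf{1}_{[1,N]}(n)F(g^{n}x)\big\|_{U^{k}}$, which tends to $0$ as $N\to\infty$ and then $w\to\infty$ by Proposition~\ref{g} (using also $\|\cdot\|_{U^{1}}\le\|\cdot\|_{U^{k}}$ to pass from a bare average to the Gowers norm). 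The key point is that the other, possibly unbounded, weights $\Lambda'_{w,r_{j}}$ enter only through their total mass $\mathbb{E}_{m_{j}}\Lambda'_{w,r_{j}}(m_{j})=O(1)$, so their unboundedness is harmless: the distinguished factor alone forces the term to be $o_{N}(1)$, followed by an arbitrarily small quantity as $w\to\infty$.

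The step I expect to be the main obstacle is precisely this multilinear control. Unlike the single-weight situation of Proposition~\ref{norm4alpha}, the cube average involves $k$ \emph{independent} prime weights, so in the van der Corput step the vectors $v(m_{i})$ still carry the other unbounded weights. Making rigorous the claim that these contribute only a bounded mass while the distinguished factor is extracted as a Gowers norm — that is, a generalized von Neumann theorem for the cube with several modified von Mangoldt weights — is where the real work lies, and it relies on the correlation bounds for $\Lambda'_{w,r}$ underlying~\cite{5} together with the uniform Lipschitz bound furnished by the reduction to a nilsystem. Once this control is in place, the remaining arguments (the residue averaging and the transfer of~\cite{8} to arithmetic progressions) are routine.
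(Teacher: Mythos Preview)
Your overall strategy matches the paper's: pass to von Mangoldt weights via Lemma~\ref{lambda2}, run the $W$-trick, telescope $\prod_i\Lambda'_{w,r_i}$, bound each error term by a Gowers norm of the distinguished weight, and invoke the cube convergence along integers from~\cite{8}. The divergence is in how you execute the Gowers-norm control, and here you make the problem much harder than it is. The paper's Proposition~\ref{norm4cube} gives the bound
\[
\Bigl\Vert\frac{1}{N^{k}}\sum_{n_{1},\dots,n_{k}}\prod_{i}b_{i}(n_{i})\prod_{\epsilon\in\{0,1\}^{k}_{*}}T^{\epsilon\cdot n}f_{\epsilon}\Bigr\Vert_{L^{2}(\mu)}\ll_{k}\bigl\Vert b_{j}\cdot\bold{1}_{[1,N]}\bigr\Vert_{U^{2}(\mathbb{Z}_{2N})}+o_{N}(1)
\]
directly on an \emph{arbitrary} measure-preserving system, with no reduction to nilsystems and needing only the $U^{2}$ norm. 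The trick is a single Cauchy--Schwarz in $L^{2}(\mu)$ separating the factors with $\epsilon_{j}=0$ from those with $\epsilon_{j}=1$; the latter \emph{all} carry $T^{n_{j}}$, so after one application of van der Corput in $n_{j}$ the resulting integral is of the form $\int\prod_{\epsilon'}T^{n_{j}+c_{\epsilon'}}(\cdots)\,d\mu$, which is \emph{constant} in $n_{j}$ by $T$-invariance. That constant $C_{h}$ factors out, leaving $\frac{1}{N}\sum_{h}\vert C_{h}\vert\cdot\bigl\vert\frac{1}{N}\sum_{n}b_{j}(n)\overline{b_{j}(n+h)}\bigr\vert$, i.e.\ the $U^{2}$ norm of $b_{j}$. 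The remaining weights enter only through the hypothesis $\frac{1}{N}\sum_{n}\vert b_{i}(n)\vert\leq 1$, which is exactly the ``bounded mass'' you anticipated---but it drops out for free, with no nilsequence machinery.

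Your detour through characteristic factors and nilsystems is therefore unnecessary, and it is also where your argument has a genuine gap. If you apply van der Corput without first stripping off the $\epsilon_{j}=0$ factors, the inner product $\langle v(m_{i}+h),v(m_{i})\rangle$ becomes a correlation $\int F_{1}\cdot T^{m_{i}}F_{2}\,d\mu$ in which $F_{1},F_{2}$ involve $T^{m_{j}}f_{\epsilon}$ for the frozen (and unbounded) $m_{j}$. On a nilsystem such correlations are indeed nilsequences in $m_{i}$, but their Lipschitz constants depend on those of $F_{1},F_{2}$; since left translation on a nilmanifold is not in general an isometry for the chosen Riemannian metric, the Lipschitz constant of $T^{m_{j}}f_{\epsilon}$ can grow with $m_{j}$, so the ``uniform Lipschitz bound in the frozen variables'' you assert is not justified, and Proposition~\ref{g} cannot be invoked uniformly. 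The paper's preliminary Cauchy--Schwarz step sidesteps this issue entirely.
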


\begin{thm}\label{cube2} Let $\mathcal{P}=\mathbb{P}+1$ or $\mathbb{P}-1$. For any sequence of integers $\{W_{N}\}_{N\in\mathbb{N}}$ with $\lim_{N\rightarrow\infty}W_{N}=\infty, W_{N}=o(\log N)$, any $k\in\mathbb{N}$, any measure preserving system $(X,\mathcal{X},\mu, T)$ and any
$A\in\mathcal{X}, \mu(A)>0$, we have
\begin{equation}\nonumber
\begin{split}
\liminf_{N\rightarrow\infty}\frac{1}{\pi(N)^{k}}\sum_{n_{1},\dots,n_{k}\in
\mathcal{P}\cap[N]}\mu\Bigl(\bigcap_{\epsilon\in\{0,1\}^{k}}T^{-W_{N}(n_{1}\epsilon_{1}+\dots+n_{k}\epsilon_{k})}A\Bigr)>0.
\end{split}
\end{equation}
\end{thm}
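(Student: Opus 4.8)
The plan is to follow the template of the proof of Theorem \ref{alpha}: pass from primes to a modified von Mangoldt average by Lemma \ref{lambda2}, strip the arithmetic weight using the Gowers norm estimate of Proposition \ref{g}, and then invoke a purely ergodic positivity statement. Writing $n_i=p_i\mp 1$ and $T_i=T^{W_N n_i}$, the integrand is the cube $\prod_{S\subseteq[k]}\mathbf 1_A\circ T_S$ with $T_S=\prod_{i\in S}T_i$, so the quantity to be bounded below is $\frac{1}{\pi(N)^k}\sum_{p_1,\dots,p_k\le N}\int_X\prod_{S\subseteq[k]}\mathbf 1_A\circ T_S\,d\mu$. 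The goal is to show this is eventually at least $\mu(A)^{2^k}-o(1)$, which is positive; this is the Host--Kra lower bound for cube averages, realized here arithmetically.

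The positivity engine is an induction on $k$ that peels off one prime variable at a time. Integrating out the last coordinate, the cube factors as $\int_X G\cdot(G\circ T^{W_N(p_k\mp1)})\,d\mu$, where $G=G_{p_1,\dots,p_{k-1}}=\prod_{S\subseteq[k-1]}\mathbf 1_A\circ T_S$ is nonnegative and bounded by $1$. The key single-variable input is that for every nonnegative $G$ with $\|G\|_{\infty}\le 1$,
\[
\liminf_{N\to\infty}\frac{1}{\pi(N)}\sum_{p\le N}\int_X G\cdot\bigl(G\circ T^{W_N(p\mp1)}\bigr)\,d\mu\ \ge\ \Bigl(\int_X G\,d\mu\Bigr)^{2},
\]
\emph{uniformly} over $G$ and over the system. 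Granting this, for fixed $p_1,\dots,p_{k-1}$ the inner $p_k$-average is at least $(\int G_{p_1,\dots,p_{k-1}})^2-o(1)$; averaging over $p_1,\dots,p_{k-1}$ and applying the Cauchy--Schwarz (Jensen) inequality bounds the whole average below by $\bigl(\frac{1}{\pi(N)^{k-1}}\sum_{p_1,\dots,p_{k-1}}\int_X G\,d\mu\bigr)^2-o(1)$. The inner sum is exactly the $(k-1)$-dimensional instance of the same average, so by the induction hypothesis it is at least $\mu(A)^{2^{k-1}}-o(1)$, yielding $\mu(A)^{2^{k}}-o(1)>0$; the base case $k=1$ is the single-variable input with $G=\mathbf 1_A$.

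To prove the single-variable input I would argue as in Theorem \ref{alpha}. By Lemma \ref{lambda2} the prime average equals $\frac1N\sum_{m\le N}\Lambda'(m)\int_X G\cdot(G\circ T^{W_N(m\mp1)})\,d\mu+o(1)$. Applying the $W$-trick with an \emph{auxiliary primorial modulus} $\widetilde W=\prod_{p<w}p$, distinct from the given scaling $W_N$, the sum splits over residues $r$ coprime to $\widetilde W$ into averages weighted by $\Lambda'_{w,r}$. Since the correlation sequence $c(m)=\int_X G\cdot(G\circ T^{W_N(m\mp1)})\,d\mu$ is a positive-definite, Besicovitch almost periodic sequence, Proposition \ref{g} lets me replace each $\Lambda'_{w,r}$ by $1$ with an error tending to $0$ as $N\to\infty$ and then $w\to\infty$. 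What remains is $\frac{1}{\phi(\widetilde W)}\sum_{(r,\widetilde W)=1}\int_X G\cdot\bigl((PG)\circ T^{W_N(r\mp1)}\bigr)\,d\mu$, where $P$ is the projection onto the $T^{W_N\widetilde W}$-invariant functions. A spectral computation then shows that, as $w\to\infty$, the reduced-residue average collapses by Ramanujan-sum cancellation to the projection $P_{W_N}$ onto the $T^{W_N}$-invariant functions (note $T^{\mp W_N}$ is itself a power of $T^{W_N}$, so the shift $\mp 1$ is absorbed harmlessly); hence the limit is $\int_X G\cdot P_{W_N}G\,d\mu=\|P_{W_N}G\|_{L^2}^2\ge(\int_X G\,d\mu)^2$, since $P_{W_N}$ fixes the constants.

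The main obstacle is making this single-variable estimate genuinely uniform. Because $W_N\to\infty$ with $N$, both $G=G_{p_1,\dots,p_{k-1}}$ and the operator $T^{W_N(\cdot)}$ move with $N$, so I cannot simply take the limit for a fixed $G$: I need a quantitative version asserting that for every $\eta>0$ there is $N_0$ with $\frac1{\pi(N)}\sum_{p\le N}c(p)\ge(\int_X G\,d\mu)^2-\eta$ for all $N\ge N_0$, all systems, and all admissible $G$, with the auxiliary modulus chosen diagonally $\widetilde W=\widetilde W(N)\to\infty$, $\widetilde W(N)=o(\log N)$. This forces one to track the dependence of the error terms in Lemma \ref{lambda2} and Proposition \ref{g} on all parameters, and to control the Ramanujan-sum/equidistribution step uniformly in $W_N$. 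The delicate point is that this cancellation produces the \emph{self-adjoint} projection $P_{W_N}$ rather than some smaller, possibly non-positive operator, and this is exactly where the hypotheses $W_N\to\infty$ and $W_N=o(\log N)$ are essential; the offset $\mathcal P=\mathbb P\pm1$ is handled identically through Lemma \ref{lambda2}, in the spirit of the Remark following Theorem \ref{alpha}.
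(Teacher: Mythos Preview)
Your strategy diverges from the paper's and contains a genuine gap in the single-variable step.

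The paper does not peel off one prime at a time. It applies Proposition~\ref{norm4cube} directly to the full $k$-variable cube: after Lemma~\ref{lambda2} and the $W$-trick, one telescopes the product $\prod_i\Lambda'_{w,r_i}$ into $1$; in each summand exactly one weight equals $\Lambda'_{w,r_j}-1$, and Proposition~\ref{norm4cube} bounds that summand by $\bigl\Vert(\Lambda'_{w,r_j}-1)\mathbf 1_{[N]}\bigr\Vert_{U^2(\mathbb Z_{2N})}+o_N(1)$, with the implicit constant and the $o_N(1)$ \emph{independent of the transformation and of the $f_\epsilon$}. Proposition~\ref{g} (with the trivial nilsequence $F\equiv 1$) then makes this small, and the remaining unweighted integer cube average is handled by the Host--Kra cube recurrence cited as Theorem~1.3 of~\cite{12}. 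The uniformity built into Proposition~\ref{norm4cube} is precisely what absorbs the varying $W_N$ without extra work.

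Your route, by contrast, hinges on showing that
\[
\frac{1}{N}\sum_{n}(\Lambda'_{w,r}(n)-1)\,c(n)\to 0,\qquad c(n)=\int_X G\cdot\bigl(G\circ T^{W_N(\widetilde W n+r\mp1)}\bigr)\,d\mu,
\]
and you appeal to Proposition~\ref{g} on the grounds that $c$ is ``positive-definite, Besicovitch almost periodic''. But Proposition~\ref{g} controls Gowers norms of $(\Lambda'_{w,r}-1)$ \emph{times a nilsequence}; the correlation sequence of an arbitrary measure-preserving system is not a nilsequence, and almost-periodicity in the Besicovitch sense is not enough to substitute for one. The honest way to bound this expression is to estimate $\bigl\Vert\frac1N\sum_n(\Lambda'_{w,r}(n)-1)\,T^{W_N(\widetilde W n+r\mp1)}G\bigr\Vert_{L^2}$ by the $U^2$ norm of the weight---which is exactly the $k=1$ case of Proposition~\ref{norm4cube}. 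Once you are using that proposition anyway, there is no saving in doing it one variable at a time; applying it in all $k$ variables simultaneously, as the paper does, avoids the induction entirely.

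Your subsequent spectral step is also not valid as written. You say that after removing the weight one obtains $\int_X G\cdot\bigl((PG)\circ T^{W_N(r\mp1)}\bigr)\,d\mu$ with $P$ the projection onto $T^{W_N\widetilde W}$-invariants. The mean ergodic theorem would produce such a projection only if $W_N$ stayed fixed as the averaging length went to infinity; here $W_N\to\infty$ with $N$, so there is no single transformation whose ergodic averages you are computing, and hence no operator $P$ on which your Ramanujan-sum cancellation can act. You flag this uniformity problem yourself in the last paragraph, but it is not a technicality---it is the point at which the argument actually breaks. The paper's route via Proposition~\ref{norm4cube} sidesteps it because the comparison with the integer average is already uniform in the transformation, and the residual integer cube average admits a lower bound of Host--Kra type that is likewise uniform (coming ultimately from iterated Cauchy--Schwarz rather than from an ergodic limit for a fixed map).
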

\begin{rem} It is not known whether one can drop the factor $W_{N}$ in this theorem (i.e. $W_{N}=1$). The reason we need $W_{N}$ is that Proposition \ref{g} is the estimate for the modified von Mangoldt function, and we do not have an estimate for the ordinary von Mangoldt function.
\end{rem}
We outline the key ingredients for the proofs.
\begin{prop}\label{norm4cube} Let $k\in\mathbb{N}$, $(X,\mathcal{X},\mu, T)$ be a measure preserving system, $f_{\epsilon}\in L^{\infty}(\mu),
\epsilon\in\{0,1\}^{k}_{*}$ be functions bounded by 1. Let $b_{1},\dots, b_{k}\colon \mathbb{N}\rightarrow\mathbb{C}$ be $k$ sequences of complex
numbers satisfying $b_{i}(n)/{n^{c}}\rightarrow 0$ for all $c>0$ and \\$\frac{1}{N}\sum_{n=1}^{N}\vert b_{i}(n)\vert\leq 1 (1\leq i\leq k)$. Then for any
$1\leq j\leq k, N\in\mathbb{N}$, we have
\begin{equation}\nonumber
\begin{split}
\Bigl\Vert\frac{1}{N^{k}}\sum_{1\leq n_{1},\dots,n_{k}\leq N}\prod_{i=1}^{k}b_{i}(n_{i}){\prod_{\epsilon\in\{0,1\}^{k}_{*}}T^{\epsilon_{1}n_{1}+\dots+\epsilon_{k}n_{k}}f_{\epsilon}}\Bigr\Vert_{L^{2}(\mu)}\ll_{k}\Bigl\Vert
b_{j}\cdot\bold{1}_{[1, N]}\Bigr\Vert_{U^{2}(\mathbb{Z}_{2N})}+o_{N}(1).
\end{split}
\end{equation}
Furthermore, the implicit constant is independent of $\{b_{1}(n)\}_{n\in\mathbb{N}},\dots, \{b_{k}(n)\}_{n\in\mathbb{N}}$ and the $o_{N}(1)$ term depends
only on $\{b_{1}(n)\}_{n\in\mathbb{N}},\dots, \{b_{k}(n)\}_{n\in\mathbb{N}}$.
\end{prop}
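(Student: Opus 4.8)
The plan is to follow the template of Proposition \ref{norm4alpha}: isolate the distinguished weight $b_j$ and reduce its control to a single Gowers norm by one van der Corput step, exploiting that the cube phases $\epsilon_1 n_1+\dots+\epsilon_k n_k$ are \emph{linear} in each variable $n_i$. First, by the symmetry of the cube average under simultaneously permuting the pairs $(n_i,b_i)$ and relabelling the $f_\epsilon$ by the induced permutation of the coordinates of $\epsilon$, I may assume $j=k$.

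Next I would separate the product over $\epsilon\in\{0,1\}^{k}_{*}$ according to whether $\epsilon_k=0$ or $\epsilon_k=1$. Writing $n'=(n_1,\dots,n_{k-1})$ and $\epsilon=(\epsilon',\epsilon_k)$, set $F(n')=\prod_{\epsilon'\in\{0,1\}^{k-1}_{*}}T^{\epsilon'\cdot n'}f_{(\epsilon',0)}$, the part independent of $n_k$, and $H(n')=\prod_{\epsilon'\in\{0,1\}^{k-1}}T^{\epsilon'\cdot n'}f_{(\epsilon',1)}$; both are bounded by $1$ in $L^\infty$. Since $T^{m}$ is an algebra homomorphism, the $\epsilon_k=1$ block equals $T^{n_k}H(n')$, so the whole average becomes $\frac{1}{N^{k-1}}\sum_{n'}\big(\prod_{i<k}b_i(n_i)\big)G(n')$ with $G(n')=F(n')\cdot\frac1N\sum_{n_k}b_k(n_k)T^{n_k}H(n')$. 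Passing the $L^2$ norm inside by the triangle inequality and using the hypothesis $\frac1N\sum|b_i|\le1$ gives $\big(\prod_{i<k}\frac1N\sum_{n_i}|b_i(n_i)|\big)\sup_{n'}\|G(n')\|_{L^2}\le\sup_{n'}\|G(n')\|_{L^2}$, so it remains to bound $\|G(n')\|_{L^2}$ uniformly in $n'$.

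For this last bound I would argue, for fixed $n'$, that $|F(n')|\le1$ pointwise gives $\|G(n')\|_{L^2}^2\le\|\frac1N\sum_{n_k}b_k(n_k)T^{n_k}H(n')\|_{L^2}^2$, and then apply Lemma \ref{van} in the variable $n_k$. The diagonal contribution is $\ll\frac1{N^2}\sum_{n_k}|b_k(n_k)|^2$, which is $o_N(1)$ by the growth hypothesis $b_k(n)/n^c\to0$ (taking $c<1/2$) and is uniform in $n'$ and in the system. For the off-diagonal terms, $T$-invariance collapses $\langle T^{n_k+h}H(n'),T^{n_k}H(n')\rangle$ to $\langle T^hH(n'),H(n')\rangle$, which is independent of $n_k$ and bounded by $1$; hence it factors out of the $n_k$-average, leaving $\frac1N\sum_h\big|\frac1N\sum_{n_k}b_k(n_k+h)\overline{b_k(n_k)}\big|$. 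One Cauchy--Schwarz in $h$ together with the (wraparound-free, thanks to the embedding in $\mathbb{Z}_{2N}$) expansion of the definition identifies this with $\ll\|b_k\mathbf{1}_{[1,N]}\|_{U^2(\mathbb{Z}_{2N})}^2$, and taking square roots yields the claimed estimate $\ll_k\|b_k\mathbf{1}_{[1,N]}\|_{U^2(\mathbb{Z}_{2N})}+o_N(1)$.

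\textbf{The crux} is the collapse in the off-diagonal step: it is essential to peel off the variables $n_1,\dots,n_{k-1}$ first, via the triangle inequality and $\frac1N\sum|b_i|\le1$, so that for each fixed $n'$ the correlation $\langle T^hH(n'),H(n')\rangle$ depends only on $h$ and can be pulled out of the $n_k$-average. Applying van der Corput to all variables simultaneously would instead produce cross-correlations between two independent copies of $n'$ that do not factor, destroying the clean control by $b_k$; the linearity of the cube phases in $n_k$ is precisely what makes the reduction to a single $U^2$ norm, rather than a higher Gowers norm, possible.
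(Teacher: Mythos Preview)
Your argument is correct and is essentially the paper's proof. The paper first dualizes against an auxiliary $f_{\bold{0}}$ of $L^{2}$-norm $1$ and then keeps the weighted average $\frac{1}{N^{k-1}}\sum_{n'}\lvert\prod_{i<k}b_{i}(n_{i})\rvert$ rather than passing to $\sup_{n'}$, but these differences are cosmetic: both routes isolate the inner $n_{k}$-average, apply Lemma~\ref{van}, use $T$-invariance to make the off-diagonal correlation independent of $n_{k}$, and finish with Cauchy--Schwarz in $h$ to reach $\Vert b_{k}\bold{1}_{[1,N]}\Vert_{U^{2}(\mathbb{Z}_{2N})}$.
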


\begin{rem}
  Proposition 7.1 of ~\cite{5} shows that this proposition is true for $X=\mathbb{Z}_{N}$.
\end{rem}

\begin{proof} We may assume $j=k$ since it is the same for other cases. Fix $f_{\bold{0}}\in{L^{\infty}(\mu)}$ with norm 1. Denote
\begin{equation}\nonumber
\begin{split}
u(n_{1},\dots,n_{k-1})=\frac{1}{N}\sum_{n_{k}=1}^{N}b_{1}(n_{1})\cdot\ldots\cdot
b_{k}(n_{k}){\prod_{\epsilon\in\{0,1\}^{k}}T^{\epsilon_{1}n_{1}+\dots+\epsilon_{k}n_{k}}f_{\epsilon}}.
\end{split}
\end{equation}
We only need to show that
\begin{equation}\nonumber
\begin{split}
\Bigl\vert\int_{X}\frac{1}{N^{k}}\sum_{1\leq n_{1},\dots,n_{k}\leq N}\prod_{i=1}^{k}b_{i}(n_{i}){\prod_{\epsilon\in\{0,1\}^{k}}T^{\sum_{i=1}^{k}\epsilon_{i}n_{i}}f_{\epsilon}}d\mu\Bigr\vert\ll_{k}\Bigl\Vert b_{k}\cdot\bold{1}_{[1,
N]}\Bigr\Vert_{U^{2}(\mathbb{Z}_{2N})}+o_{N}(1).
\end{split}
\end{equation}
Let $g'_{\epsilon}=f_{\epsilon 0},g_{\epsilon}=f_{\epsilon 1}$. Then
\begin{equation}\nonumber
\begin{split}
&\qquad\Bigl\vert\int_{X}\frac{1}{N^{k}}\sum_{1\leq n_{1},\dots,n_{k}\leq N}\prod_{i=1}^{k}b_{i}(n_{i}){\prod_{\epsilon\in\{0,1\}^{k}}T^{\sum_{i=1}^{k}\epsilon_{i}n_{i}}f_{\epsilon}}d\mu\Bigr\vert
\\&\ll \frac{1}{N^{k-1}}\sum_{1\leq n_{1},\dots,n_{k-1}\leq N}\Bigl\vert\prod_{i=1}^{k-1}b_{i}(n_{i})\Bigr\vert\cdot
\Bigl\Vert\frac{1}{N}\sum_{n=1}^{N}b_{k}(n){\prod_{\epsilon\in\{0,1\}^{k-1}}T^{\sum_{i=1}^{k-1}\epsilon_{i}n_{i}+n}g_{\epsilon}}\Bigr\Vert_{{L^{2}(\mu)}}.
\end{split}
\end{equation}
By Lemma \ref{van} and the invariance of $T$,
\begin{equation}\nonumber
\begin{split}
&\qquad\Bigl\Vert\frac{1}{N}\sum_{n=1}^{N}b_{k}(n){\prod_{\epsilon\in\{0,1\}^{k-1}}T^{\sum_{i=1}^{k-1}\epsilon_{i}n_{i}+n}g_{\epsilon}}\Bigr\Vert_{{L^{2}(\mu)}}^{2}
\\&\ll\frac{1}{N}\sum_{h=1}^{N}\Bigl\vert\frac{1}{N}\sum_{n=1}^{N-h}\int
b_{k}(n)\overline{b_{k}(n+h)}\prod_{\epsilon\in\{0,1\}^{k-1}}T^{\sum_{i=1}^{k-1}\epsilon_{i}n_{i}+n}(g_{\epsilon}T^{h}\overline{g_{\epsilon}})d\mu\Bigr\vert+o_{N}(1)
\\&=\frac{1}{N}\sum_{h=1}^{N}\Bigl\vert\frac{C_{h}}{N}\sum_{n=1}^{N-h}b_{k}(n)\overline{b_{k}(n+h)}\Bigr\vert+o_{N}(1)
\ll\Bigl\Vert b_{k}\cdot\bold{1}_{[1, N]}\Bigr\Vert_{U^{2}(\mathbb{Z}_{2N})}^{2}+o_{N}(1).
\end{split}
\end{equation}
The proposition follows from the fact that
\begin{equation}\nonumber
\begin{split}
\frac{1}{N^{k-1}}\sum_{1\leq n_{1},\dots,n_{k-1}\leq N}\Bigl\vert\prod_{i=1}^{k-1}b_{i}(n_{i})\Bigr\vert
\end{split}
\end{equation}
converges.
\end{proof}

As the detail of the proofs of Theorem \ref{cube} and \ref{cube2} are similar to (in fact, easier than) that of Theorems \ref{alpha2} and \ref{alpha}, we just indicate the main steps. By using Lemma \ref{lambda2} and corresponding convergence and recurrence results along integers (Theorems 1.2 and 1.3 of ~\cite{12}),
it suffices to show that the difference between the averages along primes and that along integers is small. And this is achieved by the upper bound obtained in Proposition \ref{norm4cube} and the estimate of Gowers norm in Proposition \ref{g}. We left the details to the reader.

\subsection{Weighted Averages along Primes}
We indicate
how the same method used for proving Theorem \ref{alpha} can be applied to generalize Theorem 2.24 of ~\cite{8} to primes:
\begin{thm}\label{ww} Let $a(n)$ be a bounded k-step nilsequence. Then for any measure preserving system
$(X,\mathcal{X},\mu, T)$ and any $f_{1},\dots,f_{k}\in L^{\infty}(\mu)$, the average
\begin{equation}\nonumber
\begin{split}
\frac{1}{\pi(N)}\sum_{p\in [N]\cap\mathbb{P}}a(p)T^{p}f_{1}(x)\cdot\ldots\cdot T^{kp}f_{k}(x)
\end{split}
\end{equation}
converges in $L^{2}(\mu)$ as $N\rightarrow\infty$.
\end{thm}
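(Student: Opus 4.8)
The plan is to run the same $W$-trick and Cauchy-sequence scheme used for Theorem~\ref{alpha2}, with the nilsequence weight $a(n)$ absorbed into the comparison with averages along the integers. First I would invoke Lemma~\ref{lambda2} (case $k=1$) to replace the average over primes by the von Mangoldt weighted average
\[
A(N)=\frac{1}{N}\sum_{n=1}^{N}\Lambda'(n)\,a(n)\,T^{n}f_{1}\cdot\ldots\cdot T^{kn}f_{k},
\]
and reduce the theorem to showing that $\{A(N)\}_{N\in\mathbb{N}}$ is Cauchy in $L^{2}(\mu)$. As in the proof of Theorem~\ref{alpha2} I would then pass to the modified von Mangoldt function: for $w$ (and $W=\prod_{p<w}p$) and $r$ coprime to $W$, set
\[
A_{w,r}(N)=\frac{1}{N}\sum_{n=1}^{N}\Lambda'_{w,r}(n)\,a(Wn+r)\,T^{Wn+r}f_{1}\cdot\ldots\cdot T^{k(Wn+r)}f_{k},
\]
and let $B_{w,r}(N)$ be the same average with $\Lambda'_{w,r}$ deleted. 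Writing $S=T^{W}$ and $\tilde f_{j}=T^{jr}f_{j}$ turns both averages into weighted multiple averages along the integers for the single transformation $S$ with exponents $jn$.

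Two ingredients then drive the argument. The first is a comparison inequality, the linear analogue of Proposition~\ref{norm4alpha}: for any weight $b$ of subpolynomial growth,
\[
\Bigl\Vert\frac{1}{N}\sum_{n=1}^{N}b(n)\,S^{n}\tilde f_{1}\cdot\ldots\cdot S^{kn}\tilde f_{k}\Bigr\Vert_{L^{2}(\mu)}\ll\Bigl\Vert b\cdot\bold{1}_{[1,N]}\Bigr\Vert_{U^{k+1}(\mathbb{Z}_{(k+1)N})}+o_{N}(1).
\]
This follows from the same van der Corput and PET induction as Proposition~\ref{norm4alpha}, but with the cutoff $\xi$ suppressed: for the linear forms $a_{v}(n)=vn$ the increments $a_{v}(n+h)-a_{v}(n)=vh$ are automatically independent of $n$, so no restriction on $\{n\alpha\}$ is needed. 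Applying this with $b(n)=(\Lambda'_{w,r}(n)-1)a(Wn+r)$ bounds $\Vert A_{w,r}(N)-B_{w,r}(N)\Vert_{L^{2}(\mu)}$ by the $U^{k+1}$ norm of $(\Lambda'_{w,r}(n)-1)a(Wn+r)\bold{1}_{[1,N]}$ plus $o_{N}(1)$. The second ingredient controls this Gowers norm: since $a$ is a $k$-step nilsequence it is a fortiori a $(k+1)$-step nilsequence, and writing $a(Wn+r)=F(g^{Wn}(g^{r}x_{0}))$ exhibits it in the form $F(g^{Wn}x)$ to which Proposition~\ref{g}(1), applied at level $k+1$, gives that this $U^{k+1}(\mathbb{Z}_{(k+1)N})$ norm tends to $0$ as $N\to\infty$ and then $w\to\infty$. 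One first approximates the nilsequence $a$ uniformly by a basic nilsequence $a'$, the error contributing $\ll\Vert a-a'\Vert_{\infty}$ to every average by boundedness of $\Lambda'_{w,r}$, so that Proposition~\ref{g} is only applied to $a'$. Finally, $B_{w,r}(N)=\frac{1}{N}\sum_{n}a(Wn+r)S^{n}\tilde f_{1}\cdots S^{kn}\tilde f_{k}$ is precisely a nilsequence-weighted multiple average along the integers, so it converges by Theorem~2.24 of~\cite{8}.

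With these in hand the Cauchy argument is identical to that for Theorem~\ref{alpha2}: for a given $\epsilon$ one chooses $w_{0}$ (hence $W_{0}$) large, uses the decomposition $A(W_{0}N)=\frac{1}{\phi(W_{0})}\sum_{(r,W_{0})=1}A_{w_{0},r}(N)$ and the comparison to replace each $A_{w_{0},r}$ by the convergent $B_{w_{0},r}$ up to $\epsilon$, concludes that $\Vert A(W_{0}N)-A(W_{0}N')\Vert_{L^{2}(\mu)}$ is small for $N,N'$ large, and removes the residues via $\Vert A(W_{0}N+s)-A(W_{0}N)\Vert_{L^{2}(\mu)}\to 0$. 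I expect the only genuine obstacle to be the index bookkeeping in the second ingredient: the weight carries a $k$-step nilsequence while the comparison produces a $U^{k+1}$ norm, so one must apply Proposition~\ref{g} one level higher and fold $a(Wn+r)$ into its $F(g^{Wn}x)$ form with the correct $r$-dependent base point $g^{r}x_{0}$. Since only finitely many residues $r$ occur for each fixed $W_{0}$, the required smallness is uniform over them and no strengthening of Proposition~\ref{g} is needed.
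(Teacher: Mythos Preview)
Your proposal is correct and follows essentially the same route as the paper's sketch: reduce to the Cauchy property via Lemma~\ref{lambda2}, compare $A_{w,r}$ with the unweighted $B_{w,r}$ by a Gowers-norm bound, control that norm with Proposition~\ref{g}(1), and use Theorem~2.24 of~\cite{8} for the convergence of $B_{w,r}$. The only difference is that the paper states the sharper linear comparison (Proposition~\ref{norm4ww}) with $U^{k}(\mathbb{Z}_{kN})$ on the right-hand side rather than $U^{k+1}$, so Proposition~\ref{g} can be applied at level $k$ directly and your ``one level higher'' workaround is unnecessary---though it is harmless.
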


We outline the key ingredients for the proof.

\begin{prop}\label{norm4ww} Let $(X,\mathcal{X},\mu, T)$ be a measure preserving system and $f_{1},\dots,f_{k}\in L^{\infty}(\mu)$. Let $b(n)\colon
\mathbb{N}\rightarrow\mathbb{C}$ be a sequence of complex numbers satisfying $b(n)/{n^{c}}\rightarrow 0, \forall c>0$. Then
\begin{equation}\nonumber
\begin{split}
\Bigl\Vert\frac{1}{N}\sum_{n=1}^{N}b(n)T^{n}f_{1}\cdot\ldots\cdot T^{kn}f_{k}\Bigr\Vert_{L^{2}(\mu)}
\ll\Bigl\Vert b(n)\cdot\bold{1}_{[1, N]}\Bigr\Vert_{U^{k}(\mathbb{Z}_{kN})}+o_{N}(1).
\end{split}
\end{equation}
Furthermore, the implicit constant is independent of $\{b(n)\}_{n\in\mathbb{N}}$ and the $o_{N}(1)$ term depends only on the integer $k$ and
$\{b(n)\}_{n\in\mathbb{N}}$.
\end{prop}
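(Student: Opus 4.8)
The plan is to follow the iterated van der Corput argument used in the proof of Proposition~\ref{norm4alpha}, which simplifies considerably in the present setting. Since the exponents here are the genuinely linear sequences $a_{i}(n)=in$, the difference $a_{i}(n+s)-a_{i}(n)=is$ is automatically independent of $n$ for every shift $s=\sum_{j}\epsilon_{j}h_{j}$; consequently the cutoff $\xi(\{n\alpha\})$ that Proposition~\ref{norm4alpha} needs in order to force these differences to be constant is no longer required, and the auxiliary functions produced at each stage are automatically well defined, bounded by $1$, and independent of $n$. As usual I would first normalize so that $\Vert f_{i}\Vert_{\infty}\leq 1$ for all $i$.

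First I would apply Lemma~\ref{van} to the sequence $v(n)=b(n)T^{n}f_{1}\cdot\ldots\cdot T^{kn}f_{k}\in L^{2}(\mu)$. The diagonal term $\frac{1}{N^{2}}\sum_{n=1}^{N}\Vert v(n)\Vert^{2}\leq\frac{1}{N^{2}}\sum_{n=1}^{N}\vert b(n)\vert^{2}$ is $o_{N}(1)$, because the hypothesis $b(n)/n^{c}\to 0$ for every $c>0$ forces $\sum_{n\leq N}\vert b(n)\vert^{2}=o(N^{1+c})$. For the off-diagonal term, writing out $\langle v(n+h),v(n)\rangle$ and applying $T^{-n}$ (using the invariance of $T$), the lowest-slope factor $f_{1}$ collapses to the $n$-independent function $T^{h}f_{1}\overline{f_{1}}$, while each pair arising from $f_{i}$ ($2\leq i\leq k$) recombines into the single function $T^{ih}f_{i}\overline{f_{i}}$ carried by $T^{(i-1)n}$. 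Thus one application of the lemma replaces the original $k$-factor average weighted by $b$ with a $(k-1)$-factor average weighted by the multiplicative difference $b(n+h)\overline{b(n)}$, exactly as in the passage leading to~(\ref{specialcode2}).

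The main body of the proof is then the induction mirroring the Claim in the proof of Proposition~\ref{norm4alpha}: for each $0\leq t\leq k$ there is a function $F(\cdot;h_{1},\dots,h_{t})\in L^{\infty}(\mu)$, bounded by $1$ and independent of $n$, such that a fixed power of $\Vert\frac{1}{N}\sum_{n}v(n)\Vert_{L^{2}(\mu)}$ is controlled by a multi-average over $h_{1},\dots,h_{t}$ of a power of an average in $n$ whose weight is $\prod_{\epsilon\in\{0,1\}^{t}}\mathcal{C}^{\vert\epsilon\vert}b(n+\sum_{j=1}^{t}\epsilon_{j}h_{j})$ and whose remaining $T$-factors involve only $f_{t+1},\dots,f_{k}$. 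Each inductive step is one further application of Lemma~\ref{van} together with the Cauchy--Schwarz inequality, the slope-$0$ functions being absorbed harmlessly into $F$ and bounded out at the end (here one uses that $F$ is independent of $n$, so it can be pulled outside the average over $n$). When every $T$-factor has been eliminated, the surviving expression is exactly the expansion of the Gowers norm of $b\cdot\bold{1}_{[1,N]}$ appearing on the right-hand side of the proposition, and taking the corresponding root gives the stated inequality; the implicit constant depends only on $k$ and the $o_{N}(1)$ terms only on $k$ and $\{b(n)\}_{n\in\mathbb{N}}$.

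I do not expect a genuine obstacle here: the hard point of Proposition~\ref{norm4alpha}, namely arranging that $a_{i}(n+h)-a_{i}(n)$ be constant in $n$, is vacuous for linear exponents, so no case analysis or support restriction is needed. The only care required is the routine bookkeeping of the iterated van der Corput estimates and the verification that each successive diagonal term is $o_{N}(1)$; the latter follows from the sub-polynomial growth of $b$, since the iterated weights are products of finitely many translates of $b$ and inherit the same growth control.
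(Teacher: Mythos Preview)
Your proposal is correct and matches the paper's treatment: the paper does not give a detailed proof of this proposition but simply cites~\cite{4} and remarks that the argument is similar to, and easier than, the proofs of Propositions~\ref{norm4alpha} and~\ref{norm4cube}. Your outline---run the iterated van der Corput scheme from the proof of Proposition~\ref{norm4alpha}, observing that for the genuinely linear exponents $a_i(n)=in$ the differences $a_i(n+h)-a_i(n)=ih$ are automatically constant in $n$ so that no cutoff $\xi$ is needed---is exactly that simplification. One small bookkeeping point: with $k$ moving factors (and no stationary $f_0$), the number of van der Corput steps should be adjusted accordingly so that the final expression unwinds to the Gowers norm appearing in the statement; this is the only place your indexing ``$0\le t\le k$'' needs a minor adjustment, but the mechanism you describe is correct.
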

This proposition is proved in ~\cite{4}. The proof is similar and actually easier than the proofs of Propositions \ref{norm4alpha} and
\ref{norm4cube}.

As the detail of the proof of Theorem \ref{ww} is similar to (in fact, easier than) that of Theorem \ref{alpha}, we just indicate the main steps. By using Lemma \ref{lambda2} and the corresponding convergence result along integers (Theorem 2.24 of ~\cite{8}),
it suffices to show that the difference between the averages along primes and that along integers is small. And this is achieved by the upper bound obtained in Proposition \ref{norm4ww} and the estimate of Gowers norm in Proposition \ref{g}. We left the details to the reader.

%\section{Further Discussion}
%It is natural to ask whether one can extend Theorems \ref{alpha2} and \ref{alpha} to the generalized polynomial case, for example, when $i[n\alpha]$ is
%replaced by $i[p(n)], i=1,\dots,k$ for some real-valued polynomial $p\colon\mathbb{N}\rightarrow\mathbb{R}$. Here we only outline the strategy for this problem:
%we first replace the decomposition $1=\sum_{i=1}^{K}\bold{1}_{(\frac{i-1}{K},\frac{i}{K})}(\{n\alpha\})$ with a new one $1=\sum_{i=1}^{K}\bold{1}_{(\frac{i-1}{K},\frac{i}{K})}(\{p(n)\})$. Then,
%following the method in this paper, it suffices to establish analogs of Propositions \ref{norm4alpha}, \ref{or4alpha2}, and \ref{or4alpha} for the new case.
%It is worth noting that the proof of the new Proposition \ref{norm4alpha} is harder than the original one, as the reduction step is much more complicated.

\end{document}